\renewcommand\eqref[1]{(\ref{#1})} 
\def\A{{\mathcal A}}
\def\o{\omega}
\def\O{\Omega}
\def\th{\theta}
\def\Z{\mathbb{Z}}
\def\B{{\mathbb B}}
\def\D{{\mathcal D}}
\def\H{\mathcal H}
\def\R{\mathbb{R}}
\def\CC{{\mathfrak C}}
\def\e{{\sf e}}
\def\m{{\sf m}}
\def\wm{\widehat{\sf m}}
\def\({\left(}
\def\[{\left[}
\def\){\right)}
\def\]{\right]}
\def\Si{\Sigma}
\def\sp{\mathop{\mathrm{sp}}\nolimits}
\def\G{{\sf G}}
\def\wG{\widehat{\sf{G}}}
\def\p{\parallel}
\def\<{\langle}
\def\>{\rangle}
\providecommand{\CC}{\mathfrak{C}}
\def\fscr{\mathscr}
\newtheorem{Theorem}{Theorem}[section]
\newtheorem{Remark}[Theorem]{Remark}
\newtheorem{Lemma}[Theorem]{Lemma}
\newtheorem{Corollary}[Theorem]{Corollary}
\newtheorem{Proposition}[Theorem]{Proposition}
\newtheorem{Definition}[Theorem]{Definition}
\newtheorem{Example}[Theorem]{Example}
\numberwithin{equation}{section}
\begin{document}


\title{Essential Spectrum and Fredholm Properties\\ for Operators on Locally Compact Groups}

\date{\today}

\author{M. M\u antoiu \footnote{
\textbf{2010 Mathematics Subject Classification: Primary 46L55, 47A53, Secondary 22D25, 47A11.}
\newline
\textbf{Key Words:}  locally compact group, pseudo-differential operator, $C^*$-algebra, dynamical system, essential spectrum, Fredholm operator.}
}
\date{\small}
\maketitle \vspace{-1cm}


\begin{abstract}
We study the essential spectrum and Fredholm properties of integral and pseudo-diferential operators associated to (maybe non-commutative) locally compact groups $\G$\,. The techniques involve crossed product $C^*$-algebras. We extend previous results on the structure of the essential spectrum to operators belonging (or affiliated) to the Schr\"odinger representation of certain crossed products. When the group $\G$ is unimodular and type I, we cover a new class \cite{MR} of pseudo-differential differential operators with operator-valued symbols involving the unitary dual of $\G$\,.
We use recent results \cite{NP,Ro} on the role of families of representations in spectral theory and the notion of quasi-regular dynamical system \cite{Wi}.
\end{abstract}


\section{Introduction}\label{duci}

The present article is dedicated to some new results on the essential spectrum and the Fredholm property of certain (mainly bounded) integral operators on locally compact groups  $\G$\,. The integral kernel of these operators have an integrable dependence of one of the variables, while the behavior in the second one is governed by a suitable $C^*$-algebra $\A$ of bounded left uniformly continuous functions on $\G$\,. In Section \ref{firica} we describe these "coefficient algebras" while in Section \ref{flekarin} we recall how they serve to define crossed product $C^*$-algebras that will be the main objects to study. The spectral properties will depend on the choice of $\A$ through the quasi-orbit structure of its Gelfand spectrum.

We insist on the fact that our groups $\G$ may not be discrete, Abelian or Lie groups. Compact groups are out of question, since they would basically lead to a trivial situation. The integral operators we study can be seen as a generalization of convolution dominated operators \cite{CWL,Lin,RRS,Wtt}, which may be essentially recovered for $\G=\mathbb Z^n$ and for the special form $\A=\ell^\infty(\Z^n)$ of the algebra (analogue operators on discrete groups are also called convolution dominated by certain authors). in Section \ref{furitar}, following the recent article \cite{MR},  we give a pseudo-differential form to these integral operators for the rather large subclass of unimodular, second countable type I groups (still containing many non-commutative examples).

We recall that the spectrum of an operator $T$ in a Hilbert space $\H$ admits the partition ${\rm sp}(T)={\rm sp}_{\rm dis}(T)\sqcup{\rm sp}_{\rm ess}(T)$\,. The point $\lambda$ belongs to {\it the discrete spectrum} ${\rm sp}_{\rm dis}(T)$ of $T$ if it is an eigenvalue of finite multiplicity and isolated from the rest of the spectrum. The other spectral points belong to the essential spectrum ${\rm sp}_{\rm esss}(T)$\,. It is known that ${\rm sp}_{\rm esss}(T)$ coincides with the usual spectrum of the image of $T$ in the Calkin algebra $\mathbb B(\H)/\mathbb K(\H)$\,, where $\mathbb B(\H)$ is the $C^*$-algebra of all the bounded linear in $\H$ and $\mathbb K(\H)$ is the ideal of all the compact operators.
The bounded operator $T$ in the Hilbert space $\H$ is {\it Fredholm} if its range is closed and its kernel and cokernel are finite-dimensional. By Atkinson's Theorem, this happens exactly when $T$ is invertible modulo compact operators, i.e. when its image in $\mathbb B(\H)/\mathbb K(\H)$ is invertible. 

So both problems have a commun setting. The Calkin algebra does not offer effective tools to treat them, but replacing $\mathbb B(\H)$ by suitable $C^*$-subalgebras can lead to an improvement. In the present work they are connected to crossed products $\A\!\rtimes\!\G$ associated to an action of our group on the coefficient $C^*$-algebra $\A$\,.

The results for an operator $H$ are written in our case in terms of a family $\{H_i\mid i\in I\,\}$ of "asymptotic operators", each one defined by a precise algorithm from a quasi-orbit of a dynamical system attached to $\A$ and, equivalently, from a certain ideal of the crossed product. Applying older techniques would lead at writing the essential spectrum of $H$ as the closure of the union of the spectra of all the operators $H_i$\,. Similarly, $H$ is Fredholm if and only if each operator $H_i$ is invertible and the norms of the inverses are uniformly bounded.
Recently a systematic investigation \cite{Ro,NP} put into clear terms conditions under which the closure and the uniform bound are not needed. We will use this, in particular conditions from \cite[Sect. 3]{NP} involving the primitive ideal space, to state our main result in Section \ref{flokoshin} and to prove it in \ref{furitut}.

The relevant information for the primitive ideal space of the crossed product relies on the notion of {\it quasi-regular dynamical system} \cite{Wi}. There is no counterexample to such a property, but for the moment, unfortunately, we have to impose quasi-regularity as in implicit condition. Some comments on sufficient conditions for quasi-regularity are contained in Section \ref{anulatus}; in particular the property holds if $\A$ is separable. Non-separable examples are given in Section \ref{anulitus}.

We partially neglect unbounded operators, although in a non-explicit way they are covered if they happen to be affiliated, i.e. if their resolvent families have the form already indicated. It is not easy to turn this into an explicit condition if the group $\G$ is too complicated. For simpler groups (Abelian for instance) affiliation is easier but many results already exist. So we will hopefully treat this issue in a future publication.

This is not the right place to try to draw a detailed history of the subject. Some basic articles and books are \cite{CWL,Da,LR,Lin,Ra,RR,RRR,RRS} (see also references therein). 
The important role that can be conferred to crossed product $C^*$-algebras has been outlined in \cite{GI1,GI2,GN,Ma}, especially in the case of Abelian groups. Twisted crossed products, needed to cover pseudo-differential operators with variable magnetic fields, are advocated in \cite{LMR,MPR2}.  Groupoid or related $C^*$-algebras have been used in \cite{LMN,LN,Ni,NP}. Te paper \cite{Ge} treats operators on suitable metric spaces. In \cite{Ma1,Ma2} we examined pseudo-differential operators in $\mathbb R^n$ with symbols $f(x,\xi)$ having a complicated behavior at infinity in "phase space", i.e. in both variables $x$ and $\xi$\,. 
The essential spectrum of operators (including Schr\"odinger Hamiltonians) has also been treated with geometrical methods (without using $C^*$-algebras at all) in \cite{HM,LS}.

By rather similar techniques one can prove localization (non-propagation) results, adapting to the non-commutative setting the approach of \cite{AMP,MPR2}.

\medskip
{\bf Acknowledgements:} The author has been supported by N\'ucleo Milenio de F\'isica Matem\'atica RC120002 and the Fondecyt Project 1120300.

\section{Left-invariant $C^*$-algebras of functions on a locally compact group}\label{firica}

Let $\G$ be a locally compact group with unit $\e$\,, modular function $\Delta$ and fixed left Haar measure $\m$\,. 
Let us denote the left and right actions of $\G$ on itself by $l_y(x):=yx$ and $r_y(x):=xy^{-1}$\,.
They induce actions on the $C^*$-algebra $\mathcal C_b(\G)$ of all bounded continuous complex functions on $\G$\,. To get pointwise continuous actions we restrict, respectively, to bounded left and right uniformly continuous functions:
\begin{equation*}\label{lefd}
{\sf l}:\G\to{\sf Aut}\big[\mathcal{LUC}(\G)\big]\,,\quad\big[{\sf l}_y(c)\big](x):=\big(c\circ l_{y^-1}\big)(x)=c(y^{-1}x)\,,
\end{equation*}
\begin{equation*}\label{righd}
{\sf r}:\G\to{\sf Aut}\big[\mathcal{RUC}(\G)\big]\,,\quad\big[{\sf r}_y(c)\big](x):=\big(c\circ r_{y^{-1}}\big)(x)=c(xy)\,.
\end{equation*}
We denoted by ${\sf Aut}(\D)$ the group of all the automorphisms of the $C^*$-algebra $\D$ with the strong (pointwise convergence) topology.

A $C^*$-subalgebra $\A$ of $\mathcal{LUC}(\G)$ is called {\it left-invariant} if ${\sf l}_y\A\subset\A$ for every $y\in\G$\,; {\it right-invariance} is defined analogously for $C^*$-subalgebras of $\mathcal{RUC}(\G)$\,. Then $\mathcal C_0(\G)$\,, the $C^*$-algebra of continuous complex functions which decay at infinity, is an invariant ideal, both to the left and to the right.

If ${\sf K}$ is a closed subgroup of $\G$ then $\G/{\sf K}:=\{x{\sf K}\mid x\in\G\}$ denotes the locally compact space of orbits of the action ${\sf r}$ restricted to ${\sf H}$\,.  One denotes by $p_{\sf K}:\G\rightarrow\G/{\sf K}$ the canonical surjection. By the formula $P_{\sf K}(\tilde c):=\tilde c\circ p_{\sf K}$ it can be used to transform functions on $\G/{\sf K}$ into functions on $\G$ which are right-invariant under ${\sf K}$\,, meaning that ${\sf r}_y(c)=c$ for every $y\in{\sf K}$\,.
Conversely, such a right-invariant function $c$ can be written in the form $c=P_{\sf K}(\tilde c)$ for a unique function $\tilde c:=c_{\sf K}$ defined on the quotient by $c_{\sf K}(x{\sf K}):=c(x)$\,.

For a $C^*$-subalgebra $\mathcal D$ of $\mathcal{LUC}(\G)$ and a compact subgroup ${\sf K}$ of $\G$ one defines the ${\sf K}$-fixed point $C^*$-algebra
\begin{equation*}\label{fiert}
\mathcal D^{({\sf K})}:=\{a\in\mathcal D\mid {\sf r}_y(a)=a\,,\,\forall\,y\in{\sf K}\}=\mathcal D\cap\mathcal{LUC}(\G)^{({\sf K})}
\end{equation*}
and the ideal $\mathcal D_0:=\mathcal D\cap\mathcal C_0(\G)$\,. If $\mathcal D$ is left invariant, both $\mathcal D^{({\sf K})}$ and $\mathcal D_0$ are left invariant.

We call {\it $\sf G$-compactification of $\,\G/{\sf K}$} a compact dynamical system $(\O,\vartheta,\G)$ and an embedding of $\,\G/{\sf K}$ as a dense open $\vartheta$-invariant subset of $\,\O$ such that on $\G/{\sf K}$ the action $\vartheta$ coincides with the natural action of $\,\G$ given by $\,\vartheta_x(y{\sf K}):=(xy){\sf K}=l_x(y){\sf K}$\,.

\begin{Definition}\label{stufat}
{\rm A right invariance data} is a triple $\Upsilon=({\sf K},\Omega,\vartheta)$ where ${\sf K}$ is a compact subgroup of $\,\G$ and $(\Omega,\vartheta,\G)$ is a  $\sf G$-compactification of $\,\G/{\sf K}$\,.
\end{Definition}

The right invariance data being given, let us set $\,\mathcal C\<\O\>:=\big\{b|_{\G/{\sf K}}\!\mid\!b\in\mathcal C(\O)\big\}$\,. It is a left invariant $C^*$-subalgebra of $\mathcal{LUC}(\G/{\sf K})$ (left uniform continuity refers to the action of $\sf G$ in $\G/{\sf K}$\,), containing $\mathcal C_0(\G/{\sf K})$ and isomorphic to $\mathcal C(\O)$\,. One sets $\Omega_\infty:=\O\setminus(\G/{\sf K})$\,; it is the space of a compact dynamical system $\big(\O_\infty,\vartheta,\G)$\,.  The quotient of $\mathcal C\<\O\>$ by the ideal $\mathcal C_0(\G/{\sf K})$ is canonically isomorphic to $\mathcal C(\O_\infty)$\,. 

To each right invariance data $\Upsilon:=({\sf K},\O,\vartheta)$ one associates
\begin{equation}\label{spalat}
\A[{\sf K},\Omega]:=\big\{a\in\mathcal{LUC}(\G)^{({\sf K})}\,\big\vert\, a_{\sf K}\in\mathcal C\<\O\>\big\}\,.
\end{equation}
It is formed of bounded left uniformly continuous functions on $\G$ which are ${\sf K}$-fixed points to the right and which, after reinterpretation,  can be extended continuously to the compactification $\Omega$ of $\G/{\sf K}$\,. 

\begin{Lemma}\label{stors}
$\A[{\sf K},\Omega]$ is a left invariant unital $C^*$-subalgebra of $\mathcal{LUC}(\G)$ equivariantly isomorphic to $\mathcal C\<\O\>$\,. Its quotient by $\A[{\sf K},\Omega]_0$ is equivariantly isomorphic to $\mathcal C\big(\O_\infty\big)$\,.
\end{Lemma}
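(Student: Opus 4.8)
The plan is to realize $\A[{\sf K},\Omega]$ as the image of the pullback map $P_{\sf K}$ restricted to $\mathcal C\<\O\>$, and then to transport every asserted property from the (already recorded) properties of $\mathcal C\<\O\>$, $\mathcal C(\O)$ and $\mathcal C(\O_\infty)$; concretely, I would prove that $P_{\sf K}$ restricts to an isometric, equivariant $*$-isomorphism of $\mathcal C\<\O\>$ onto $\A[{\sf K},\Omega]$ carrying the ideal $\mathcal C_0(\G/{\sf K})$ onto $\A[{\sf K},\Omega]_0$. First I would check that $P_{\sf K}\big(\mathcal C\<\O\>\big)\subseteq\A[{\sf K},\Omega]$ and that this restriction is injective and isometric. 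Being a pullback along the continuous surjection $p_{\sf K}$, $P_{\sf K}$ is automatically an injective isometric $*$-homomorphism, and every $P_{\sf K}(\tilde c)$ is bounded, continuous and right-${\sf K}$-invariant, so it lies in $\mathcal{LUC}(\G)^{({\sf K})}$ as soon as one knows it is left uniformly continuous. For the latter, use the intertwining identity $p_{\sf K}\circ l_{y^{-1}}=\vartheta_{y^{-1}}\circ p_{\sf K}$ (immediate from $\vartheta_x(y{\sf K})=(xy){\sf K}$) together with surjectivity of $p_{\sf K}$, which yield $\big\|{\sf l}_y\big(P_{\sf K}\tilde c\big)-P_{\sf K}\tilde c\big\|_\infty=\sup_{\xi\in\G/{\sf K}}\big|\tilde c(\vartheta_{y^{-1}}\xi)-\tilde c(\xi)\big|$; this tends to $0$ as $y\to\e$ precisely because $\tilde c\in\mathcal C\<\O\>\subset\mathcal{LUC}(\G/{\sf K})$. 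Since moreover $(P_{\sf K}\tilde c)_{\sf K}=\tilde c\in\mathcal C\<\O\>$, the function $P_{\sf K}(\tilde c)$ belongs to $\A[{\sf K},\Omega]$.

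Next I would establish surjectivity onto $\A[{\sf K},\Omega]$, unitality, left invariance and equivariance. If $a\in\A[{\sf K},\Omega]$ then $a$ is right-${\sf K}$-invariant, so by the bijection recalled before the statement $a=P_{\sf K}(a_{\sf K})$ with $a_{\sf K}\in\mathcal C\<\O\>$; hence $\A[{\sf K},\Omega]=P_{\sf K}\big(\mathcal C\<\O\>\big)$ is a $C^*$-subalgebra of $\mathcal{LUC}(\G)$, isometrically $*$-isomorphic to $\mathcal C\<\O\>$ and therefore to $\mathcal C(\O)$. It is unital because $1\in\mathcal C\<\O\>$ and $P_{\sf K}(1)$ is the constant function $1$ on $\G$. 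The same identity $p_{\sf K}\circ l_{y^{-1}}=\vartheta_{y^{-1}}\circ p_{\sf K}$ gives ${\sf l}_y\circ P_{\sf K}=P_{\sf K}\circ\widehat{\sf l}_y$, where $\widehat{\sf l}$ is the $\G$-action on functions on $\G/{\sf K}$ induced by $\vartheta$ (the transport of the canonical action on $\mathcal C(\O)$); since $\mathcal C\<\O\>$ is left invariant, so is $\A[{\sf K},\Omega]$, and $P_{\sf K}$ is an equivariant isomorphism, which (composed with $\mathcal C(\O)\cong\mathcal C\<\O\>$) gives the first claimed equivariant isomorphism.

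For the quotient statement I would show that $P_{\sf K}$ carries $\mathcal C_0(\G/{\sf K})$ exactly onto $\A[{\sf K},\Omega]_0=\A[{\sf K},\Omega]\cap\mathcal C_0(\G)$. Here compactness of ${\sf K}$ is used, through properness of $p_{\sf K}$: for $\tilde c\in\mathcal C_0(\G/{\sf K})$ the sets $\{|P_{\sf K}\tilde c|\ge\varepsilon\}=p_{\sf K}^{-1}\big(\{|\tilde c|\ge\varepsilon\}\big)$ are compact, so $P_{\sf K}\tilde c\in\mathcal C_0(\G)$; conversely, if $P_{\sf K}\tilde c\in\mathcal C_0(\G)$ then $\{|\tilde c|\ge\varepsilon\}=p_{\sf K}\big(\{|P_{\sf K}\tilde c|\ge\varepsilon\}\big)$ is a continuous image of a compact set, hence compact, and $\tilde c\in\mathcal C_0(\G/{\sf K})$. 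Thus $P_{\sf K}$ descends to an equivariant isomorphism $\A[{\sf K},\Omega]/\A[{\sf K},\Omega]_0\cong\mathcal C\<\O\>/\mathcal C_0(\G/{\sf K})$, and composing with the canonical equivariant isomorphism $\mathcal C\<\O\>/\mathcal C_0(\G/{\sf K})\cong\mathcal C(\O_\infty)$ recalled above finishes the argument.

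The main (and essentially only non-formal) point is the geometry of $p_{\sf K}$ used twice: that left uniform continuity on $\G/{\sf K}$ transfers correctly under pullback (handled by the intertwining identity and surjectivity of $p_{\sf K}$), and that the pullback matches $\mathcal C_0(\G/{\sf K})$ with $\A[{\sf K},\Omega]_0$ (where properness of $p_{\sf K}$, i.e. compactness of ${\sf K}$, is essential). Everything else — $P_{\sf K}$ being an isometric $*$-homomorphism, the correspondence $a\leftrightarrow a_{\sf K}$, unitality, and equivariance of both isomorphisms — is routine bookkeeping, since all the maps in sight are $\G$-equivariant.
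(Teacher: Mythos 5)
Your proposal is correct and follows essentially the same route as the paper: the isomorphism is the correspondence $a\leftrightarrow a_{\sf K}$ (you simply write it in the inverse direction, as the pullback $P_{\sf K}$), equivariance comes from the same intertwining of left translation with $\vartheta$ through $p_{\sf K}$, and the quotient statement reduces in both cases to identifying $\A[{\sf K},\Omega]_0$ with $\mathcal C_0(\G/{\sf K})$ and using $\mathcal C(\O)/\mathcal C_0(\G/{\sf K})\cong\mathcal C(\O_\infty)$. You merely spell out details the paper calls straightforward (properness of $p_{\sf K}$, transfer of left uniform continuity), while the paper additionally notes that $a_{\sf K}\in\mathcal C\<\O\>$ is automatic for $a\in\mathcal C_0(\G)^{({\sf K})}$ because $\mathcal C_0$-functions extend to any compactification.
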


\begin{proof}
Checking that $\A[{\sf K},\Omega]$ is a unital left invariant $C^*$-subalgebra is straightforward. 
The isomorphism is $\,a\to a_{\sf K}$\,. In addition, if $x{\sf K}\in\G/{\sf K}$\,, then 
$$
({\sf l}_z a)_{\sf K}(x{\sf K})=({\sf l}_z a)(x)=a\big(z^{-1}x\big)=a_{\sf K}\big(z^{-1}x{\sf K}\big)=a_{\sf K}\big[\vartheta_{z^{-1}}(x{\sf K})\big]=:\big[\theta_z(a_{\sf K})\big](x{\sf K})\,,
$$
which proves the equivariance. For the quotient, note that 
\begin{equation*}\label{prajit}
\A[{\sf K},\Omega]_0=\big\{a\in\mathcal C_0(\G)^{({\sf K})}\,\big\vert\,a_{\sf K}\in\mathcal C\<\O\>\big\}=\mathcal C_0(\G)^{({\sf K})}\cong\mathcal C_0(\G/{\sf K})
\end{equation*}
is independent of $\O$\,. The second equality follows from the fact that any $\mathcal C_0(\Si)$-function on a locally compact space $\Si$ extends continuously to any compactification of $\Si$\,. Then, equivariantly, one has
$$
\A[{\sf K},\Omega]/\A[{\sf K},\Omega]_0\cong\mathcal C(\Omega)/\mathcal C_0(\G/{\sf K})\cong\mathcal C\big(\O_\infty\big)\,.
$$
\end{proof}

Taking ${\sf K}=\{\e\}$ one gets the most general left-invariant $C^*$-subalgebra $\A[\{\e\},\Omega]:=\big\{a\in\mathcal{LUC}(\G)\mid a\in\mathcal C\<\O\>\big\}$ of $\mathcal{LUC}(\G)$ containing $\mathcal C_0(\G)$ and perhaps this is the most interesting situation. Here $\O$ is just a $\G$-compactification of $\G$\,. In particular $\mathcal{LUC}(\G)=\A\big[\{\e\},\Omega^{\rm luc}\big]$\,, where $\Omega^{\rm luc}$ is the (left) uniform compactification of $\G$\,.

\begin{Remark}\label{baimari}
{\rm Actually the $C^*$-algebra $\A[{\sf K},\Omega]$ associated to some right invariance data $\Upsilon:=({\sf K},\O,\vartheta)$ is the most general unital left-invariant $C^*$-subalgebra $\A$ of $\mathcal{LUC}(\G)$ for which $\A_0:=\A\cap\mathcal C_0(\G)$ is an essential ideal. 

To see this, we note that $\A_0$ is a left invariant $C^*$-subalgebra of $\mathcal C_0(\G)$\,. Let us recall \cite[Lemma 12]{LL} that the non-trivial left-invariant $C^*$-subalgebras of $\mathcal C_0(\G)$ are in one-to-one correspondence with the compact subgroups of $\G$\,. Explicitly, applying this to $\mathcal A_0$\,,
\begin{equation*}\label{lau}
{\sf K}_{\mathcal A}:=\big\{\,y\in\G\mid {\sf r}_y(a)=a\,,\ \forall\,a\in\mathcal A_0\,\big\}
\end{equation*}
is a compact subgroup of $\G$ and one has the fixed-point characterization
\begin{equation*}\label{deleew}
\mathcal A_0=\{\,a\in\mathcal C_0(\G)\mid {\sf r}_y(a)=a\,,\ \forall\,y\in{\sf K}_\mathcal A\,\}=:\mathcal C_0(\G)^{({\sf K_\A})}\cong\mathcal C_0\big(\G/{\sf K}_{\mathcal A}\big)\,.
\end{equation*} 
Then one uses the fact that all the essential unitizations of $\A_0$ are given by compactifications $\Omega$ of $\G/{\sf K}_\A$ and that left invariance forces $\O$ to be a $\G$-compactification for some action $\vartheta$\,.
}
\end{Remark}

\section{Crossed product $C^*$-algebras}\label{flekarin}

We recall briefly some basic facts about crossed products, referring to \cite{Wi} for a full treatment.

The basic data is a {\it $C^*$-dynamical system}  $(\A,\alpha,\G)$\,, consisting of a strongly continuous action $\alpha$ of the locally group $\G$ by automorphisms of the $C^*$-algebra $\G$\,. For us, $\A$ will always be commutative.
To the $C^*$-dynamical system $(\A,\alpha,\G)$ we associate  $L^1(\G;\A)$ (the space of all Bochner integrable functions $\G\to\A$ with the obvious norm) with the Banach $^*$-algebra structure given by
\begin{equation*}\label{diamond}
(\Phi\diamond \Psi)(x):=\int_{\G}\!\Phi(y)\,\alpha_y\!\left[\Psi(y^{-1}x)\right]d\m(y)\,,
\end{equation*}
\begin{equation*}\label{willibrant}
\Phi^\diamond(x):=\Delta(x)^{-1}\,\alpha_x\!\left[\Phi(x^{-1})^*\right]\,.
\end{equation*}
{\rm The crossed product $C^*$-algebra} $\A\!\rtimes_\alpha\!\G$ is the enveloping $C^*$-algebra of this Banach $^*$-algebra, i.e its completion in the universal norm $\p\!\Phi\!\p_{\rm univ}\,:=\,\sup_{\Pi}\p\!\Pi(\Phi)\!\p_{\mathbb B(\H)}$\,,
with the supremum taken over the non-degenerate $^*$-representations $\,\Pi:L^1(\G;\A)\rightarrow\mathbb B(\H)$\,. 
The Banach space $L^1(\G;\A)$ can be identified with the projective tensor product $\A\,\overline\otimes\,L^1(\G)$\,, and $\mathcal C_{\rm c}(\G;\A)$\,, the space of all $\A$-valued continuous compactly supported function on $\G$\,, is a dense $^*$-subalgebra. 

{\it A covariant representation} of the $C^*$-dynamical system $(\A,\alpha,\G)$  is a triple $(\rho,T,\H)$ where $\H$ is a Hilbert space, $\rho:\A\rightarrow\mathbb B(\H)$ is a non-degenerate $^*$-representation,
$T:\G\rightarrow\mathbb U(\H)$ is strongly continuous and satisfies
\begin{equation*}\label{clasa}
T(x)T(y)=T(xy)\,,\quad\forall\,x,y\in\G\,,
\end{equation*}
\begin{equation*}\label{holds}
T(x)\rho(a)T(x)^*=\rho\left[\alpha_x(a)\right]\,,\quad\forall\,a\in\A\,,\,x\in\G\,.
\end{equation*}
{\it The integrated form} of the covariant representation $(\rho,T,\H)$ is the unique continuous extension $\,\rho\!\rtimes\!T:\A\!\rtimes_\alpha\!\G\rightarrow\mathbb B(\H)$ defined initially on $L^1(\G;\A)$ by
\begin{equation}\label{maximilian}
(\rho\!\rtimes\!T)(\Phi):=\!\int_\G\!\rho[\Phi(x)]T(x)d\m(x)\,.
\end{equation}

\begin{Remark}\label{mamsatu}
{\rm For further use, let us also examine $^*$-morphisms in the setting of crossed products (cf \cite{Wi}). Assume that $(\A,\alpha,\G)$ and 
$(\A',\alpha',\G)$ are $C^*$-dynamical systems and $\pi:\A\rightarrow\A'$ is {\it an equivariant $^*$-morphism}, i.e. a $^*$-morphism satisfying 
$\pi\circ\alpha_x=\alpha'_x\circ\pi\,,\ \forall\,x\in\G$\,. One defines 
\begin{equation}\label{varzar}
\pi^\rtimes:L^1(\G;\A)\rightarrow L^1(\G;\A')\,,\quad\big[\pi^\rtimes(\Phi)\big](x):=\pi[\Phi(x)]\,.
\end{equation}
It is easy to check that $\pi^\rtimes$ is a $^*$-morphism of the two Banach $^*$-algebra structures and thus it extends to a $^*$-morphism 
$\pi^\rtimes:\A\!\rtimes_\alpha\!\G\rightarrow\A'\!\rtimes_{\alpha'}\!\G$\,. If $\pi$ is injective, $\pi^\rtimes$ is also injective.
}
\end{Remark}

Our most important crossed product will be attached to the $C^*$-dynamical system $\big(\A[{\sf K},\O],{\sf l},\G\big)$\,, where $\A[{\sf K},\O]$ is  the left invariant $C^*$-subalgebra of $\mathcal{LUC}(\G)$ associated as in Section \ref{firica} to a right invariance data $\Upsilon=({\sf K},\O,\vartheta)$\,.
For $\A[{\sf K},\O]$-valued functions $\Phi$ defined on $\G$ and for elements $x,q$ of the group, we are going to use notations as $[\Phi(x)](q)=:\Phi(q;x)$\,, interpreting $\Phi$ as a function of two variables. For the reader's convenience, we rewrite the general formulae defining the twisted crossed product in a concrete form:
\begin{equation*}\label{diamond}
(\Phi\diamond \Psi)(q;x):=\int_{\G}\!\Phi(q;y)\,\Psi\big(y^{-1}q;y^{-1}x\big)\,d\m(y)\,,
\end{equation*}
\begin{equation*}\label{willibrant}
\Phi^\diamond(q;x):=\Delta(x)^{-1}\,\overline{\Phi(x^{-1}q;x^{-1})}\,.
\end{equation*}

\medskip
In this case we have a natural covariant representation $\big({\sf Mult},T,\H\big)$\,, for historical reasons called {\it the Schr\"odinger representation}, given in $\mathcal H:=L^2(\G)$ by
\begin{equation*}\label{razvan}
\[T(y)u\]\!(q):=u\!\(y^{-1}q\)\,,\ \quad {\sf Mult}(a)u:=a u\,.
\end{equation*}
The corresponding integrated form ${\sf Sch}:={\sf Mult}\rtimes T$ is given for $\Phi\in L^1\big(\G;\A[{\sf K},\O]\big)$ and $u\in L^2(\G)$ by 
\begin{equation}\label{rada}
\[{\sf Sch}(\Phi)u\]\!(q)=\!\int_\G\Phi\big(q;z\big)u(z^{-1}q)\,d\m(z)=\!\int_\G\Phi\!\left(q;qy^{-1}\right)\!u(y)\Delta(y)^{-1}d\m(y)\,. 
\end{equation}

\begin{Proposition}\label{danda}
The $C^*$-algebra 
$\mathfrak C[{\sf K},\O]:={\sf Sch}\big(\A[{\sf K},\O]\rtimes_{\sf l}\G\big)$ 
is isomorphic to the reduced crossed product $\big(\A[{\sf K},\O]\!\rtimes_{\sf l}\!\G\big)_{\rm red}$\,. 
If $\,\G$ is amenable, the representation 
$$
{\sf Sch}:\A[{\sf K},\O]\rtimes_{\sf l}\G\rightarrow\mathfrak C[{\sf K},\O]\subset\mathbb B\big[L^2(\G)\big]
$$ 
is faithful.
\end{Proposition}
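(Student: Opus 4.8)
The plan is to recognize ${\sf Sch}$ as a regular representation of the crossed product and then to invoke the standard fact that regular representations are precisely what defines the reduced crossed product. Concretely, I would introduce the character $\epsilon\colon\A[{\sf K},\O]\to\mathbb C$, $\epsilon(a):=a(\e)$, which is a non-degenerate $^*$-representation because $\A[{\sf K},\O]$ is unital (Lemma \ref{stors}). Comparing the explicit formula \eqref{rada} with the formula for the representation ${\rm Ind}(\epsilon)$ of $\A[{\sf K},\O]\rtimes_{\sf l}\G$ induced from $\epsilon$ (acting on $L^2(\G;\mathbb C)=L^2(\G)$), one checks directly --- using $\epsilon\big({\sf l}_{q^{-1}}(\Phi(z))\big)=[\Phi(z)](q)=\Phi(q;z)$ --- that ${\sf Sch}={\rm Ind}(\epsilon)$ (up to an obvious unitary coming from the choice of Haar-measure conventions). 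Since by definition $\big(\A[{\sf K},\O]\rtimes_{\sf l}\G\big)_{\rm red}$ is the image of ${\rm Ind}(\rho_0)$ for any faithful non-degenerate $^*$-representation $\rho_0$ of $\A[{\sf K},\O]$ (this image being independent of $\rho_0$, see \cite{Wi}), the task reduces to showing $\|{\rm Ind}(\epsilon)(\Phi)\|=\|{\rm Ind}(\rho_0)(\Phi)\|$ for every $\Phi$, despite $\epsilon$ being a one-dimensional, hence very non-faithful, representation.

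For this I would use two standard permanence properties of induction from \cite{Wi}: (i) ${\rm Ind}$ respects weak containment, and (ii) for each $x\in\G$ one has ${\rm Ind}(\sigma\circ{\sf l}_{x^{-1}})\cong{\rm Ind}(\sigma)$, the translation being absorbed by the regular part of the construction. Now $\epsilon\circ{\sf l}_{x^{-1}}=\epsilon_x$ is evaluation at $x$, and $\bigoplus_{x\in\G}\epsilon_x$ is a faithful representation of $\A[{\sf K},\O]$, since a function on $\G$ vanishing at every point is zero; equivalently, $\G/{\sf K}$ is dense in the Gelfand spectrum $\Omega$ of $\A[{\sf K},\O]$ by Lemma \ref{stors}, so the orbit $\{\epsilon_x\mid x\in\G\}$ is dense in $\widehat{\A[{\sf K},\O]}$. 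Consequently $\rho_0$ and $\bigoplus_{x\in\G}\epsilon_x$ are weakly equivalent, so ${\rm Ind}(\rho_0)$ is weakly equivalent to $\bigoplus_{x\in\G}{\rm Ind}(\epsilon_x)\cong\bigoplus_{x\in\G}{\rm Ind}(\epsilon)$, which has exactly the same norm as ${\rm Ind}(\epsilon)={\sf Sch}$. Hence $\|{\sf Sch}(\Phi)\|=\|\Phi\|_{\rm red}$ for all $\Phi\in L^1\big(\G;\A[{\sf K},\O]\big)$, and as both sides are $C^*$-norms agreeing on a dense $^*$-subalgebra this yields the asserted isomorphism $\mathfrak C[{\sf K},\O]\cong\big(\A[{\sf K},\O]\rtimes_{\sf l}\G\big)_{\rm red}$.

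For the second assertion I would invoke the classical theorem that, when $\G$ is amenable, the canonical surjection $\A\rtimes_\alpha\G\to(\A\rtimes_\alpha\G)_{\rm red}$ is an isomorphism for every $C^*$-dynamical system (\cite{Wi}); equivalently $\|\Phi\|_{\rm univ}=\|\Phi\|_{\rm red}$. Combining this with the first part gives $\|{\sf Sch}(\Phi)\|=\|\Phi\|_{\rm univ}$ on the dense $^*$-subalgebra $L^1\big(\G;\A[{\sf K},\O]\big)$, so the $^*$-morphism ${\sf Sch}\colon\A[{\sf K},\O]\rtimes_{\sf l}\G\to\mathfrak C[{\sf K},\O]$ is isometric, hence injective.

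The only delicate point I anticipate is the core norm identity $\|{\rm Ind}(\epsilon)(\Phi)\|=\|\Phi\|_{\rm red}$: one must be careful that inducing from a single, non-faithful character still recaptures the full reduced norm, which is exactly where the weak-containment permanence of ${\rm Ind}$ and the absorption of the $\G$-action are used; the remaining ingredients (the identification ${\sf Sch}={\rm Ind}(\epsilon)$, faithfulness of $\bigoplus_{x\in\G}\epsilon_x$, and the amenability theorem) are routine. It is also worth noting that the argument is insensitive to $\A[{\sf K},\O]$ being non-separable, since weak-containment reasoning does not require any countability hypothesis.
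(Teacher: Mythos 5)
Your argument is correct, but it takes a genuinely different route from the paper's. The paper disposes of the statement in one line by citing the explicit unitary equivalence ${\sf Reg}\cong 1_{L^2(\G)}\otimes{\sf Sch}$ from \cite[Prop. 7.9]{MR}: the regular representation induced from the faithful representation ${\sf Mult}$ of $\A[{\sf K},\O]$ on $L^2(\G)$ is literally a multiple of the Schr\"odinger representation, so the two have the same kernel, the image of ${\sf Sch}$ is the reduced crossed product, and amenability upgrades this to faithfulness on the full crossed product exactly as in your last step. You instead identify ${\sf Sch}$ with ${\rm Ind}(\epsilon)$ for the evaluation character at the unit and recover the reduced norm by soft means: the permanence of weak containment under ${\rm Ind}$ (equivalently, that $\ker{\rm Ind}(\rho)$ depends only on $\ker(\rho)$), the equivalence ${\rm Ind}(\sigma\circ{\sf l}_{x^{-1}})\cong{\rm Ind}(\sigma)$, and the elementary observation that the translates $\{\epsilon_x\mid x\in\G\}$ form a faithful family because $\G/{\sf K}$ is dense in $\O$. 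Your route is in effect the direct-sum version of the paper's direct-integral decomposition ${\rm Ind}({\sf Mult})\cong\int^\oplus_{\G}{\rm Ind}(\epsilon_x)\,d\m(x)$: it trades the concrete intertwining unitary for abstract permanence properties of induction (all available in \cite{Wi}), which buys you an argument insensitive to separability or measurability issues and applicable to any inducing representation whose $\G$-orbit is jointly faithful; the paper's version buys the sharper structural statement that ${\sf Reg}$ is an exact multiple of ${\sf Sch}$. Your computation $\epsilon\bigl({\sf l}_{q^{-1}}(\Phi(z))\bigr)=\Phi(q;z)$ matching \eqref{rada}, and the treatment of the amenable case via the coincidence of the universal and reduced norms, are both accurate.
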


\begin{proof}
This follows from the fact that, up to multiplicity, the Schr\"odinger representation is unitarily equivalent to a left regular representation of the full crossed product: ${\sf Reg}\cong 1_{L^2(\G)}\otimes{\sf Sch}$\,. For the details see \cite[Prop. 7.9]{MR} for instance.
\end{proof}

\begin{Remark}\label{fredegonda}
{\rm Let us define the convolution operator ${\sf Conv}:L^1(\G)\rightarrow\mathbb B\!\[L^2(\G)\]$ by
\begin{equation*}
\big[{\sf Conv}(\varphi)u\big](q)=\[{\sf Sch}(1\otimes\varphi)u\]\!(q)=\int_\G\!\varphi\!\left(qy^{-1}\right)\!u(y)\Delta(y)^{-1}d\m(y)\,.
\end{equation*}
Setting $(a\otimes\varphi)(q;x):=a(q)\varphi(x)$\,, one gets immediatly a product of a multiplication operator with a convolution operator
\begin{equation*}\label{crocodil}
{\sf Sch}(a\otimes\varphi)={\sf Mult}(a)\,{\sf Conv}(\varphi)\,.
\end{equation*}

The represented $C^*$-algebra $\,\mathfrak C[{\sf K},\O]:={\sf Sch}\big(\A[{\sf K},\O]\rtimes_{\sf l}\G\big)$ coincides with the closed vector space spanned by products of the form ${\sf Mult}(a)\,{\sf Conv}(\varphi)$ with $a\in\A[{\sf K},\O]$ and  $\varphi\in L^1(\G)$\,. It is enough to use functions $\,\varphi\in\mathcal C_{\rm c}(\G)$ which are continuous and compactly supported.
}
\end{Remark}

\begin{Example}\label{ospat}
{\rm If the group $\G$ is discrete, then $\,\mathcal{LUC}(\G)=\mathcal{RUC}(\G)=\ell^\infty(\G)$\,. The crossed product $\ell^\infty(\G)\!\rtimes_{\sf l}\G$ is called traditionally {\it the $C^*$-algebra of band dominated operators} or sometimes, in the context of coarse geometry, {\it the uniform  Roe algebra}. It contains a lot of interesting subclasses, as described in \cite{CWL,Lin,RRS,Wtt} for example.
In this case $\delta_{\e}\in \ell^1(\G)$ and the multiplication operator ${\sf Mult}(a)={\sf Sch}(a\otimes\delta_{\e})$ also belongs to the $C^*$-subalgebra ${\sf Sch}\big[\ell^\infty(\G)\!\rtimes_{\sf l}\G\big]$ of $\mathbb B\big[\ell^2(\G)\big]$\,.
So, besides products of the form ${\sf Mult}(a){\sf Conv}(\varphi)$\,,  perturbations 
\begin{equation*}\label{convmult}
{\sf Sch}\big(1\otimes\varphi+a\otimes\delta_{\e}\big)={\sf Conv}(\varphi)+{\sf Mult}(a)
\end{equation*} 
of convolution operators by operators of multiplication with elements of $\ell^\infty(\G)$ also belong to our $C^*$-algebra. These are a reminiscence of usual Schr\" odinger operators.
}
\end{Example}

\section{Pseudodifferential operators on type I groups}\label{furitar}

To switch to a realization fitted to studying pseudo-differential operators \cite{MR}, we need more assumptions on the group $\G$\,, allowing a manageable Fourier transformation.

We set $\,\wG:={\rm Irrep(\G)}/_{\cong}$ and call it {\it the unitary dual of $\,\G$}\,; by definition, it is composed of unitary equivalence classes of strongly continuous irreducible Hilbert space representation $\pi:\G\rightarrow\mathbb U(\H_\pi)\subset\mathbb B(\H_\pi)$.  There is a standard Borel structure on $\wG$\,, called {\it the Mackey Borel structure} \cite[18.5]{Di}. 
If $\G$ is Abelian, the unitary dual $\wG$ is the Pontryagin dual group; if not, $\wG$ has no group structure.

We denote by $C^*(\G)$ the full (universal) $C^*$-algebra of $\G$\,. 
Any representation $\pi$ of $\G$ generates canonically a non-degenerate represention $\Pi$ of the $C^*$-algebra $C^*(\G)$\,.

\begin{Definition}\label{uja}
The locally compact group $\G$ is {\it type I} if for every irreducible representation $\pi$ one has $\,\mathbb K(\H_\pi)\subset\Pi\big[C^*(\G)\big]$\,.
It will be called {\rm admissible} if it is second countable, type I and unimodular.
\end{Definition}

For  the concept of {\it type I group} and for examples we refer to \cite{Di,Fo1}; a short summary can be  found in \cite[Sect. 2]{MR}.  In \cite[Th. 7.6]{Fo1} (see also \cite{Di}), many equivalent characterisations are given for a second  countable locally compact group to be type I.  The main consequence of this property is the existence of a measure on the unitary dual $\wG$ for which a Plancherel Theorem holds.
This is a measure on $\wG$\,, called {\it the Plancherel measure associated to $\m$} and denoted by $\wm$ \cite[18.8]{Di}. It plays a basic role in defining a Fourier transform.

It is known that there is a $\wm$-measurable field $\big\{\,\H_\xi\mid\xi\in\wG\,\big\}$ of Hilbert spaces  and a measurable section $\wG\ni\xi\mapsto\pi_\xi\in{\rm Irrep(\G)}$ such that each $\pi_\xi:\G\rightarrow\mathbb B(\H_\xi)$ is a irreducible representation belonging to the class $\xi$\,. By a systematic abuse of notation, instead of $\pi_\xi$ we will write $\xi$\,, identifying irreducible representations (corresponding to the measurable choice) with elements of $\wG$\,.  

The Fourier transform \cite[18.2]{Di} of $u\in L^1(\G)$ is given in weak sense by
\begin{equation*}\label{ion}
({\fscr F}u)(\xi)\equiv \widehat{u}(\xi):=\int_\G u(x)\xi(x)^*d\m(x) \in\mathbb B(\H_\xi)\,.
\end{equation*} 
It defines an injective linear contraction ${\fscr F}:L^1(\G)\rightarrow \mathscr B(\wG)$\,, where $\mathscr B(\wG):=\int^\oplus_{\wG}\mathbb B(\H_\xi)d\wm(\xi)$ is a direct integral von Neumann algebra. One also introduces the direct integral Hilbert space 
\begin{equation*}\label{andy}
\mathscr B^2(\wG):=\int_{\wG}^\oplus\!\B^2(\H_\xi)\,d\wm(\xi)\,\cong\int_{\wG}^\oplus\!\H_\xi\otimes\overline\H_\xi\,d\wm(\xi)\,,
\end{equation*}
with the scalar product
\begin{equation*}\label{justin}
\<\phi_1,\phi_2\>_{\mathscr B^2(\wG)}:=\int_{\wG}\,\<\phi_1(\xi),\phi_2(\xi)\>_{\mathbb B^2(\H_\xi)}d\wm(\xi)=\int_{\wG}{\rm Tr}_\xi\!\[\phi_1(\xi)\phi_2(\xi)^*\]d\wm(\xi)\,,
\end{equation*}
where ${\rm Tr}_\xi$ is the trace in $\mathbb B(\H_\xi)$\,.
A generalized form of Plancherel's Theorem \cite{Di,Fo1} states that {\it the Fourier transform ${\fscr F}$ extends from $L^1(\G)\cap L^2(\G)$ to a unitary isomorphism ${\fscr F}:L^2(\G)\rightarrow \mathscr B^2(\wG)$}\,. 

Let us come back to the left invariant algebra $\A[{\sf K},\O]$ associated to the right invariance data $({\sf K},\O)$\,. We already mentioned that $L^1\big(\G;\A[{\sf K},\O]\big)$ can be identified with the completed projective tensor product $\A[{\sf K},\O]\,\overline\otimes\,L^1(\G)$\,. Then one gets a linear continuous injection 
\begin{equation*}\label{absenta}
{\sf id}\,\overline\otimes\,{\fscr F}:\A[{\sf K},\O]\,\overline\otimes\,L^1(\G)\rightarrow\A[{\sf K},\O]\,\overline\otimes\,\mathscr B(\wG)
\end{equation*}
and we endow the image $\big({\sf id}\,\overline\otimes\,{\fscr F}\big)\big(\A[{\sf H},\O]\,\overline\otimes\,L^1(\G)\big)$ with the Banach $^*$-algebra structure transported from $L^1\big(\G;\A[{\sf H},\O]\big)\cong \A[{\sf H},\O]\,\overline\otimes\,L^1(\G)\,$ through $\,{\sf id}\,\overline\otimes\,{\fscr F}$\,. 

Let us denote by $\mathfrak A[{\sf H},\O]$ the crossed product $\A[{\sf K},\O]\!\rtimes_{\sf l}\!\G$\,, which is the envelopping $C^*$-algebra  of the Banach $^*$-algebra $L^1\big(\G;\A[{\sf K},\O]\big)$. Similarly, we denote by $\mathfrak B[{\sf K},\O]$ the envelopping $C^*$-algebra of the Banach $^*$-algebra $({\sf id}\,\overline\otimes\,{\fscr F})\big(\A[{\sf K},\O]\,\overline\otimes\,L^1(\G)\big)$\,. By the universal property of the enveloping functor, the map ${\sf id}\,\overline\otimes\,{\fscr F}$ extends to an isomorphism $\,\mathfrak F:\mathfrak A[{\sf K},\O]\rightarrow\mathfrak B[{\sf K},\O]$\,. 

If we compose the Schr\"odinger representation ${\sf Sch}$ with the inverse of this partial Fourier transform one finds the pseudo-differential representation
\begin{equation}\label{surprize}
{\sf Op}={\sf Sch}\circ\mathfrak F^{-1}:\mathfrak B[{\sf K},\O]\to\mathbb B\big[L^2(\G)\big]\,,
\end{equation}
given explicitly (on reasonable symbols $f$) by
\begin{equation}\label{bilfret}
\[{\sf Op}(f)u\]\!(x)=\int_\G\!\int_{\wG}\,{\rm Tr}_\xi\Big[\xi(xy^{-1})f\big(x,\xi\big)\Big]u(y)d\m(y)d\wm(\xi)\,.
\end{equation}
One could say roughly that ${\sf Op}(f)$ is {\it a strictly negative order pseudodifferential operator on $\G$ with coefficients in the $C^*$-algebra $\A[{\sf K},\O]$}\,. Note that its symbol $f$ is globally defined and, for non-commutative groups, it is operator-valued. If $\G=\R^n$ one recovers the Kohn-Nirenberg quantization.

More information on these type of operators, including motivations, extension to distributions,  $\tau$-quantizations (allowing a Weyl symmetric form in some cases) can be found in \cite{MR}. For compact Lie groups we refer to \cite{RT,RT1,RTW} (see also references therein), while the nilpotent case is treated in \cite{FR,FR1}.

\section{The main result}\label{flokoshin}

Al over this section a right invariance data $\Upsilon=({\sf K},\O,\vartheta)$ will be fixed. Associated to it one has the left invariant subalgebra $\A[{\sf K},\O]$ of $\mathcal{LUC}(\G)$ and the crossed product $\,\mathfrak A[{\sf K},\O]:=\A[{\sf K},\O]\!\rtimes_{\sf l}\!\G$ represented by the $C^*$-subalgebra $\mathfrak C[{\sf K},\O]$ of $\mathbb B\big[L^2(\G)\big]$\,. 

If the group $\G$ is admissible, one also has the partially Fourier transformed $C^*$-algebra $\,\mathfrak B[{\sf K},\O]:=\mathfrak F\,\mathfrak A[{\sf K},\O]$\,, composed essentially by symbols of pseudo-differential operators, and the following diagram commutes:
$$
\begin{diagram}
\node{\mathfrak A[{\sf K},\O]} \arrow{e,t}{\mathfrak F}\arrow{s,r}{\sf Sch}\node{\mathfrak B[{\sf K},\O]}\arrow{sw,r}{{\sf Op}}\\ 
 \node{\mathfrak C[{\sf K},\O]}
\end{diagram}
$$

We set $\,\Omega_\infty:=\O\setminus(\G/{\sf K})$ and recall that $\big(\Omega_\infty,\vartheta,\G\big)$ is a compact dynamical system giving rise to the $C^*$-dynamical system $\big(\mathcal C(\O_\infty),\th,\G\big)$\,. By definition, {\it a quasi-orbit} is the closure of an orbit. So any point $\o\in\O_\infty$ generates an orbit ${\sf O}^\o:=\vartheta_\G(\o)$ and a (compact) quasi-orbit  ${\sf Q}^\o:=\overline{{\sf O}^\o}$\,.

For every closed subset $F$ of $\O_\infty$ we denote by $\mathcal C^F\!\big(\O_\infty\big)$ the ideal of all the elements $b$ of $\mathcal C(\O_\infty)$ such that $b|_F=0$\,. The quotient $\mathcal C\big(\O_\infty\big)/\mathcal C^F\!\big(\O_\infty\big)$ is naturally isomorphic to $\mathcal C(F)$\,. If $F$ is also $\vartheta$-invariant, this gives rise \cite[Prop. 3.19]{Wi} to a canonical isomorphism at the level of crossed products: $\mathcal C\big(\O_\infty\big)\!\rtimes\!\G\,/\,\mathcal C^F\!\big(\O_\infty\big)\!\rtimes\!\G\cong \mathcal C(F)\!\rtimes\!\G$ (the three actions are connected to the initial action $\th$ in an obvious way). The case $F={\sf Q}^\o$ will play an important role in our arguments below.

Let $\Phi\in L^1\big(\G;\A[{\sf K},\O]\big)$ and $\o\in\O_\infty$\,. We indicate now a procedure to associate to this function $\Phi:\G\times\G\to\mathbb C$ another function $\Phi^{\o}:\G\times\G\to\mathbb C$\,. The next constructions are done up to negligible subsets in the second variable $x$\,; one could also start with regular elements $\Phi\in \mathcal C_{\rm c}\big(\G;\A[{\sf K},\O]\big)$ and then invoke exensions.

\begin{enumerate}
\item[(i)]
Define $\,\Phi_{\sf K}:\G/{\sf K}\times\G\to\mathbb C$ by $\,\Phi_{\sf K}(q{\sf K},x):=\Phi(q,x)$\,.
\item[(ii)]
Extend $\Phi_{\sf K}$ to a function $\widetilde\Phi_{\sf K}$ on $\O\times\G$ continuous in the first variable.
\item[(iii)]
Define $\Phi^\o$ by $\Phi^\o(q,x):=\widetilde\Phi_{\sf K}(\vartheta_q(\o),x)$\,.
\end{enumerate}

Note that (i) is connected to the fixed-point condition in the definition of $\A[{\sf K},\O]$ and (ii) to the second half of its definition, describing the behaviour at infinity of its elements by the fact that, after reinterpretation, they can be extended continuously on $\O=(\G/{\sf K})\sqcup\O_\infty$\,.

A rough way to describe this procedure is to say that $\Phi$ is first transformed in a function on the quotient (this step is not necessary if ${\sf H}=\{\e\}$), then it is extended to $\O\times\G$ and restricted to ${\sf O}^\o\times\G$\,, and then the variable along the orbit is reinterpreted as the variable in the group.

It is easy to see that $\Phi^\o\in L^1\big(\G;\mathcal{LUC}(\G)\big)$\,. 
So, if we represent $\Phi$ as an operator (a Hamiltonian) $H:={\sf Sch}(\Phi)$ in $L^2(\G)$\,, we also get a family of "asymptotic Hamiltonians" 
\begin{equation}\label{simptotik}
\big\{H^\o\!:={\sf Sch}(\Phi^\o)\,\big\vert\, \o\in\O_\infty\big\}\,.
\end{equation}
Explicitly, for $u\in L^2(\G)$\,, one has by \eqref{rada}
\begin{equation}\label{explicit}
\big[H^\o (u)\big](q)=\int_\G\Phi\!\left(\vartheta_q(\o),qy^{-1}\right)\!u(y)\,d\m(y)\,.
\end{equation}

\begin{Remark}\label{eventual}
{\rm It will be relevant for our Theorem \ref{labba} to understand in advance the dependence of $H^\o$ on $\o$\,. If $\o$ and $\o'$ belong to the same orbit, then $H^\o$ and $H^{\o'}$ are unitary equivalent (use an element $x\in\G$ such that $\vartheta_x(\o)=\o'$ to build the unitary operator). On the other hand, two points $\o,\o'$ could generate the same quasi-orbit (i.e. $\overline{{\sf O}^\o}=\overline{{\sf O}^{\o'}}$\,) without belonging to the same orbit (think of minimal systems, for example). In such a situation it is still true that ${\rm sp}\big(H^{\o}\big)={\rm sp}\big(H^{\o'}\big)$\,. We refer to \cite[Sect. 7.4]{MR} for proofs (in a slightly different context), but the same conclusions will also follow from our proof in Section \ref{furitut}.
}
\end{Remark}

\begin{Definition}\label{ghiwech} 
A set $\{\o_i\mid i\in I\}$ of points of $\,\O_\infty$ is called {\rm a sufficient family} if the associated quasi-orbits $\big\{{\sf Q}^{\o_i}\mid i\in I\big\}$ form a covering of $\,\O_\infty$\,, i.e. $\bigcup_{i\in I}{\sf Q}^{\o_i}=\O_\infty$\,.
\end{Definition}

We adapt now \cite[Def. 6.17]{Wi} to the case of an Abelian $C^*$-algebra $\,\A=\mathcal C(\O_\infty)$\,. Recall that every representation of a crossed product is deduced from a covariant representation.

\begin{Definition}\label{fromthere}
A representation $P =p\rtimes U$ of $\,\mathcal C(\O_\infty)\!\rtimes_\th\!\G$ {\rm lives on a quasi-orbit} if there exists $\,\o\in\O_\infty$ such that $\,{\rm Res}(\ker P):=\ker(p)=\mathcal C^{\sf Q^\o}\!(\O_\infty)$\,.

The dynamical system $\big(\O_\infty,\vartheta,\G\big)$ is called {\rm quasi-regular} if every irreducible representation of $\,\mathcal C(\O_\infty)\!\rtimes_{\th}\!\G$ lives on a quasi-orbit.
\end{Definition}

Let us now state our main result.

\begin{Theorem}\label{labba}
Assume that the locally compact group $\G$ is amenable, let $\Upsilon=\big({\sf K}\,,\,\Omega=(\G/{\sf K})\sqcup\O_\infty,\vartheta\big)$ be a right invariance data and assume that $\big(\O_\infty,\vartheta,\G\big)$ is quasi-regular. Let $\Phi\in L^1\big(\G;\A[{\sf K},\O]\big)$ and $H:={\sf Sch}(\Phi)\in \mathfrak C[{\sf K},\O]\subset\mathbb B\big[L^2(\G)\big]$\,. Let $\{\o_i\mid i\in I\,\}$ a sufficient family of points of $\,\O_\infty$ and for each $i\in I$ set $H^{\o_i}\!:={\sf Sch}\big(\Phi^{\o_i}\big)$\,.  
\begin{enumerate}
\item
One has
\begin{equation}\label{fich}
{\rm sp}_{\rm ess}(H)=\underset{i\in I}{\bigcup}\,{\rm sp}\big(H^{\o_i}\big)\,.
\end{equation}
\item
The operator $H$ is Fredholm if and only if every $H^{\o_i}$ is invertible.
\end{enumerate}
\end{Theorem}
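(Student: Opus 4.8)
The plan is to carry everything over to the crossed product $\mathfrak A[{\sf K},\O]:=\A[{\sf K},\O]\rtimes_{\sf l}\G$, which by Proposition \ref{danda} and amenability is faithfully represented by ${\sf Sch}$ as $\mathfrak C[{\sf K},\O]$. The ideal $\mathfrak A_0:=\A[{\sf K},\O]_0\rtimes_{\sf l}\G\cong\mathcal C_0(\G/{\sf K})\rtimes\G$ of $\mathfrak A[{\sf K},\O]$ is sent by ${\sf Sch}$ into the ideal $\mathbb K\big(L^2(\G)\big)$ of compact operators (a direct kernel estimate on products ${\sf Mult}(a){\sf Conv}(\varphi)$, $a\in\A[{\sf K},\O]_0$, $\varphi\in\mathcal C_{\rm c}(\G)$). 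The first, and decisive, step is to show that ${\sf Sch}$ in fact \emph{descends to an injection} of the quotient $\mathfrak A[{\sf K},\O]/\mathfrak A_0$ into the Calkin algebra, i.e. that $\mathfrak C[{\sf K},\O]\cap\mathbb K\big(L^2(\G)\big)={\sf Sch}(\mathfrak A_0)$. Granting this, Lemma \ref{stors} and \cite[Prop. 3.19]{Wi} identify the image of $\mathfrak C[{\sf K},\O]$ in the Calkin algebra with $\mathcal B:=\mathcal C(\O_\infty)\rtimes_\th\G$, so that ${\rm sp}_{\rm ess}(H)={\rm sp}_{\widetilde{\mathcal B}}(\Phi_\infty)$ and, by Atkinson's Theorem together with spectral permanence, $H$ is Fredholm if and only if $\Phi_\infty$ is invertible in $\widetilde{\mathcal B}$, where $\Phi_\infty$ is the image of $\Phi$ in $\mathcal B$.

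Second, I would realise each asymptotic operator $H^{\o}$ as a representation of a quotient of $\mathcal B$. For $\o\in\O_\infty$ the pair $(\rho^\o,T,L^2(\G))$, with $[\rho^\o(b)u](q):=b\big(\vartheta_q(\o)\big)u(q)$ and $T$ left translation, is a covariant representation of $\big(\mathcal C({\sf Q}^\o),\th,\G\big)$; its integrated form $P^\o:=\rho^\o\rtimes T$ is exactly the regular representation of the crossed product induced from the evaluation $b\mapsto b(\o)$, and comparison of formula \eqref{maximilian} with \eqref{explicit} and with step (iii) in the construction of $\Phi^\o$ gives $P^\o(\Phi_\infty)={\sf Sch}(\Phi^\o)=H^\o$ (one uses that $\Phi^\o$ depends on $\Phi$ only through the class of $\Phi_\infty$ in the quotient $\mathcal C({\sf Q}^\o)\rtimes\G$ of $\mathcal B$ attached to the closed invariant set ${\sf Q}^\o$). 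Since the orbit of $\o$ is dense in ${\sf Q}^\o$, the evaluation at $\o$ is $\G$-faithful on $\mathcal C({\sf Q}^\o)$, so by the standard property of induced (regular) representations of crossed products \cite{Wi} and by amenability $P^\o$ is faithful on $\mathcal C({\sf Q}^\o)\rtimes_\th\G=\mathcal B/\ker P^\o$. In particular ${\rm sp}(H^\o)={\rm sp}\big(P^\o(\Phi_\infty)\big)$ and $H^\o$ is invertible iff $P^\o(\Phi_\infty)$ is; this also re-proves the invariance assertions of Remark \ref{eventual}, since $\ker P^\o$ depends only on the quasi-orbit ${\sf Q}^\o$.

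Third, I would check that $\{P^{\o_i}\mid i\in I\}$ is an \emph{exhaustive} family of representations of $\mathcal B$ in the sense of \cite[Sect. 3]{NP}: every irreducible representation of $\mathcal B$ is weakly contained in some $P^{\o_i}$. Indeed, given an irreducible $\pi$, quasi-regularity of $\big(\O_\infty,\vartheta,\G\big)$ yields $\o\in\O_\infty$ with ${\rm Res}(\ker\pi)=\mathcal C^{{\sf Q}^\o}(\O_\infty)$, so $\pi$ factors through $\mathcal C({\sf Q}^\o)\rtimes\G$; since $\{{\sf Q}^{\o_i}\}$ covers $\O_\infty$ there is $i$ with $\o\in{\sf Q}^{\o_i}$, whence ${\sf Q}^\o\subseteq{\sf Q}^{\o_i}$, $\mathcal C({\sf Q}^\o)\rtimes\G$ is a quotient of $\mathcal C({\sf Q}^{\o_i})\rtimes\G=\mathcal B/\ker P^{\o_i}$, and therefore $\ker\pi\supseteq\ker P^{\o_i}$. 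Now \cite[Sect. 3]{NP} (see also \cite{Ro}) applies: for the exhaustive family $\{P^{\o_i}\}$ (extended to the unitization) an element of $\widetilde{\mathcal B}$ is invertible iff all its images $P^{\o_i}(\cdot)$ are invertible — no uniform bound on the inverses being needed — whence also ${\rm sp}_{\widetilde{\mathcal B}}(b)=\bigcup_{i}{\rm sp}\big(P^{\o_i}(b)\big)$ with no closure. Taking $b=\Phi_\infty$ and combining with the first two steps gives both \eqref{fich} and the Fredholm criterion.

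The main obstacle is the first step: the \emph{injectivity} of the induced map $\mathcal B=\mathfrak A[{\sf K},\O]/\mathfrak A_0\to\mathbb B\big(L^2(\G)\big)/\mathbb K\big(L^2(\G)\big)$, equivalently that an operator in $\mathfrak C[{\sf K},\O]$ which is compact has symbol supported on $\G/{\sf K}$. The inclusion ${\sf Sch}(\mathfrak A_0)\subseteq\mathbb K$ is routine; the reverse requires genuine input and is the place where amenability is used in earnest (so that $\mathcal C(\O_\infty)\rtimes\G$ agrees with its reduced form and the corresponding regular representation is isometric): one shows, by translating an element of $\mathfrak C[{\sf K},\O]$ towards infinity in $\G$ along suitable nets, that its compact part is annihilated while the class in $\mathcal C(\O_\infty)\rtimes\G$ is recovered faithfully — in the spirit of the limit-operator / uniform-Roe-algebra techniques, with the necessary building blocks available in \cite{MR}. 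A minor additional point is the bookkeeping of invertibility in the (for non-discrete $\G$, non-unital) algebra $\widetilde{\mathcal B}$ versus invertibility of the bounded operators $H^{\o_i}$; this is consistent and is subsumed in the exhaustiveness statement applied to the unitization.
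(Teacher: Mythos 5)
Your proposal follows the same architecture as the paper: pass to the crossed product $\mathfrak A[{\sf K},\O]$, quotient by the ideal corresponding to the compact operators so as to land in $\mathcal C(\O_\infty)\!\rtimes_\th\!\G$, realise each $H^{\o_i}$ as a faithful representation of the quotient $\mathcal C({\sf Q}^{\o_i})\!\rtimes\!\G$ (faithfulness coming from density of the orbit of $\o_i$ in its quasi-orbit plus amenability), prove that the family of restriction morphisms is exhaustive using quasi-regularity and the covering property of the sufficient family, and invoke \cite[Sect.~3]{NP} to remove the closure and the uniform bound. Your second and third steps coincide with Proposition \ref{aceastta}, Proposition \ref{macmac} and the closing argument of Section \ref{furitut}, including the observation that $\ker P^\o$ depends only on the quasi-orbit (Remark \ref{eventual}).

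The one point of substance where you diverge is the step you yourself flag as the ``main obstacle'', namely $\mathfrak C[{\sf K},\O]\cap\mathbb K\big[L^2(\G)\big]={\sf Sch}(\mathfrak A_0)$, for which you propose an unexecuted limit-operator argument (translating elements to infinity along nets). As written this is a gap --- the claim is asserted, not proved --- but it is also an unnecessary detour, because the statement has a two-line proof from facts already established: by amenability ${\sf Sch}$ is faithful on all of $\mathcal{LUC}(\G)\!\rtimes_{\sf l}\!\G$ (Proposition \ref{danda}), and it maps the ideal $\mathcal C_0(\G)\!\rtimes_{\sf l}\!\G$ onto $\mathbb K\big[L^2(\G)\big]$; since both $\mathfrak A[{\sf K},\O]$ and $\mathcal C_0(\G)\!\rtimes_{\sf l}\!\G$ sit inside $\mathcal{LUC}(\G)\!\rtimes_{\sf l}\!\G$, injectivity of ${\sf Sch}$ there yields ${\sf Sch}\big(\mathfrak A[{\sf K},\O]\big)\cap\mathbb K\big[L^2(\G)\big]={\sf Sch}\big(\mathfrak A[{\sf K},\O]\cap\big[\mathcal C_0(\G)\!\rtimes_{\sf l}\!\G\big]\big)$ directly. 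This is how the paper argues; the identification of that intersection with $\A[{\sf K},\O]_0\!\rtimes\!\G$ together with exactness of the full crossed-product functor \cite[Prop.~3.19]{Wi} and Lemma \ref{stors} then produces $\mathcal C(\O_\infty)\!\rtimes_\th\!\G$. With your first step replaced by this observation, your proof is complete and agrees with the paper's.
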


\begin{Remark}\label{ginaml}
{\rm One can even take $\Phi\in\mathfrak A[{\sf K},\O]$\,, but the operations leading from $\Phi$ to $\Phi^{\o_i}$ must be understood, after extension, as abstract $C^*$-morphisms; even in simple situations some of the elements of the crossed products are no longer usual functions.}
\end{Remark}

\begin{Remark}\label{ginall}
{\rm If $\G$ is admissible, we can recast Theorem \ref{labba} in the equivalent language of pseudo-differential operators introduced in Section \ref{furitar}. One just has to express $H$ as ${\sf Op}(f)$ for an operator symbol $f=\mathfrak F \Phi$ of $\mathfrak B({\sf K},\O)$ and then gets symbols $f^{\o_i}$ such that $H^{\o_i}={\sf Op}\big(f^{\o_i}\big)$ by performing suitable operations on $f$ or, equivalently, by setting $f^{\o_i}=\mathfrak F \Phi^{\o_i}$ for every $i\in I$\,.
We leave the details to the reader.}
\end{Remark}

\begin{Remark}\label{gental}
{\rm The result describing the essential spectrum can be extended to unbounded operators affiliated to $\mathfrak C[{\sf K},\O]\subset\mathbb B\big[L^2(\G)\big]$\,, i.e. self-adjoint densely defined operators in $L^2(\G)$ for which the resolvent family belongs to $\mathfrak C[{\sf K},\O]$\,. For this one has to use \cite[Sect. 5]{NP} (see also \cite{GI1,GI2,Ma,MPR2} and references therein for previous work). Affiliation to crossed product $C^*$-algebras with non-commutative groups is a difficult issue; hopefully a future publication will be devoted to this issue and to its spectral consequences.
}
\end{Remark}

\section{Proof of Theorem \ref{labba}}\label{furitut}

Our operator $H$ belongs to the $C^*$-subalgebra $\mathfrak C[{\sf K},\O]$ of $\mathbb B\big[L^2(\G)\big]$\,. It is well-known that its essential spectrum coincides with the usual spectrum of its image in the quotient $C^*$-algebra $\mathfrak C[{\sf K},\O]\,/\,\mathfrak C[{\sf K},\O]_0$\,, where we set $\mathfrak C[{\sf K},\O]_0:=\mathfrak C[{\sf K},\O]\cap\mathbb K\big[L^2(\G)\big]$\,. (This quotient can be regarded as a $C^*$-subalgebra of the Calkin algebra.) In addition, by Atkinson's Theorem, $H$ is Fredholm if and only if its canonical image in the quotient is invertible.

It is also well-known \cite{Wi} that the Schr\"odinger $^*$-representation sends $\mathcal C_0(\G)\!\rtimes_{\sf l}\!\G$ onto $\mathbb K\!\[L^2(\G)\]\subset\mathbb B\!\[L^2(\G)\]$\,; it is an isomorphism between $\mathcal C_0(\G)\!\rtimes_{\sf l}\!\G$ and $\mathbb K\!\[L^2(\G)\]$ since $\G$ is amenable.
Taking this into account, we see that $\,{\sf Sch}:\mathfrak A[{\sf K},\O]\to\mathfrak C[{\sf K},\O]$ is an isomorphism sending $\mathfrak A[{\sf K},\O]_0:=\mathfrak A[{\sf K},\O]\cap\big[\mathcal C_0(\G)\!\rtimes_{\sf l}\!\G\big]$ into $\mathfrak C[{\sf K},\O]_0$\,. This leads to isomorphisms
\begin{equation}\label{lasat}
\mathfrak A[{\sf K},\O]\,/\,\mathfrak A[{\sf K},\O]_0\cong\mathfrak C[{\sf K},\O]\,/\,\mathfrak C[{\sf K},\O]_0\,.
\end{equation}
On the other hand, by using exactness of the full crossed product functor \cite[Prop. 3.19]{Wi} and invoking Lemma \ref{stors}, one justifies the isomorphisms
\begin{equation}\label{pasat}
\mathfrak A[{\sf K},\O]\,/\,\mathfrak A[{\sf K},\O]_0\cong\big(\A[{\sf K},\O]\,/\,\A[{\sf K},\O]_0\big)\!\rtimes\!\G\cong\mathcal C(\O_\infty)\!\rtimes_\th\!\G\,.
\end{equation}
We did not indicate each time the actions defining the various crossed products, because they are natural; the one involved in the last term in \eqref{pasat}
is induced by $\vartheta$ on the corona space $\O_\infty$ of $\O$ as described in Section \ref{firica}\,.

By the isomorphisms \eqref{lasat} and \eqref{pasat}, one has to investigate the invertibility issue in the crossed product $\mathcal C(\O_\infty)\!\rtimes_\th\!\G$\,, and this is the core of the proof. We follow closely the abstract approach of \cite{NP,Ro}. 

\begin{Definition}\label{exhausting}
Let $\CC$ be a $C^*$-algebra. A family $\,\mathcal G:=\big\{\Pi_i:\CC\rightarrow\CC_i\mid i\in I\big\}$ of morphisms of $C^*$-algebras is 
\begin{itemize}
\item
{\rm faithful} if $\,\bigcap_{i\in I}\ker(\Pi_i)=\{0\}$ (this is equivalent to $\,\p\!\Phi\!\p_{\CC}\,=\,\sup_{i\in I}\!\p\!\Pi_{i}(\Phi)\!\p_{\CC_{i}}\,$ for every $\Phi\in\mathfrak C$)\,.
\item
{\rm strictly norming} if for any $\Phi\in\CC$ there exists $k\in I$ such that  
$\,\p\!\Phi\!\p_{\CC}\,=\,\p\!\Pi_{k}(\Phi)\!\p_{\CC_{k}}$\,.
\item
{\rm exhaustive} if every primitive ideal $\mathfrak P$ of $\CC$ contains at least one of the ideals $\,\ker(\Pi_i)$\,.
\end{itemize}
\end{Definition}

It has been shown in \cite{NP} that 
\begin{equation*}\label{neata}
{\rm exhaustive}\,\Longrightarrow\,{\rm strictly\ norming}\,\Longrightarrow\,{\rm faithful}
\end{equation*}
and none of the implications is an equivalence. However, if $\CC$ is separable, "strictly norming" and "exhaustive" are equivalent. 
The role of the strictly norming property is put into evidence by the next result, taken from  \cite[Sect. 3]{NP} and \cite{Ro} (cf. also \cite{Ex}):

\begin{Proposition}\label{macmac}
The family $\,\mathcal G$ is strictly norming if and only if the invertibility of an arbitrary element $\Phi$ of $\,\CC$ is equivalent to the invertibility of each of the elements $\Pi_i(\Phi)\in \CC_i$ and also equivalent to
\begin{equation}\label{vetta}
{\rm sp}(\Phi)=\bigcup_{i\in I}{\rm sp}\big[\Pi_i(\Phi)\big]\,,\quad\forall\,\Phi\in \CC\,.
\end{equation}
\end{Proposition}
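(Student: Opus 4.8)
The plan is to show that three statements are equivalent: strict norming of $\mathcal G$; the \emph{invertibility criterion}, namely that for every $\Phi\in\CC$ the element $\Phi$ is invertible precisely when each $\Pi_i(\Phi)$ is invertible; and the spectral identity \eqref{vetta}. Throughout I read invertibility and the spectra in \eqref{vetta} in the unital setting (with $\CC$ and the $\CC_i$ unital and the $\Pi_i$ unital), which is the form in which the statement is meant and in which it is used after \eqref{pasat}; the point of unitality is that then $f(A)\in\CC$ for every continuous $f$ and every self-adjoint $A$, regardless of the value $f(0)$. Two preliminary remarks come free of charge: each $\Pi_i$, being a $*$-morphism, is contractive and does not enlarge spectra, so $\sp\big(\Pi_i(\Phi)\big)\subseteq\sp(\Phi)$ and hence the inclusion $\bigcup_i\sp\big(\Pi_i(\Phi)\big)\subseteq\sp(\Phi)$ always holds; the real content of \eqref{vetta} is the reverse inclusion together with the claim that the union needs no closure. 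Moreover the invertibility criterion and \eqref{vetta} are interchangeable by pure bookkeeping: applying the criterion to $\Phi-\lambda$ for each $\lambda\in\mathbb C$ translates ``$\Phi-\lambda$ invertible'' into ``$\lambda\notin\sp(\Phi)$'' and ``each $\Pi_i(\Phi)-\lambda$ invertible'' into ``$\lambda\notin\bigcup_i\sp\big(\Pi_i(\Phi)\big)$'', which is exactly \eqref{vetta}, and the same computation read backwards recovers the criterion from \eqref{vetta}. It therefore suffices to close the cycle strict norming $\Rightarrow$ invertibility criterion $\Rightarrow$ \eqref{vetta} $\Rightarrow$ strict norming.

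The crux is strict norming $\Rightarrow$ invertibility criterion, and this is where the whole hypothesis is spent. One direction of the criterion is automatic, since a unital $*$-morphism carries invertibles to invertibles. For the converse I argue by contraposition: suppose $\Phi$ is not invertible. Then one of the positive elements $A:=\Phi^*\Phi$ or $\Phi\Phi^*$ fails to be invertible; say $A$ does, so that $0\in\sp(A)$. Consider $f(t):=\max(0,1-t)$, which satisfies $0\le f\le 1$ with $f(t)=1$ only at $t=0$; since $0\in\sp(A)$ one gets $\|f(A)\|=\max_{t\in\sp(A)}f(t)=1$. Now strict norming applied to $f(A)\in\CC$ produces an index $k$ with $\|f(A)\|=\|\Pi_k\big(f(A)\big)\|$, and because $\Pi_k$ (being unital) commutes with the continuous functional calculus this says $\max_{t\in\sp(\Pi_k(A))}f(t)=1$. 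As $\sp(\Pi_k(A))\subseteq[0,\infty)$ and $f<1$ off the origin, this forces $0\in\sp(\Pi_k(A))$, so $\Pi_k(A)=\Pi_k(\Phi)^*\Pi_k(\Phi)$ is not invertible and neither is $\Pi_k(\Phi)$. The main obstacle is precisely this step: one must convert the purely qualitative fact $0\in\sp(A)$ into the existence of a single witnessing index, with no passage to a closure and no uniform bound, and the bump function $f$ peaked at $0$ is the device that lets strict norming do exactly that.

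It remains to prove \eqref{vetta} $\Rightarrow$ strict norming, which is a short compactness argument. Given $\Phi\in\CC$, set $A:=\Phi^*\Phi\ge 0$, so that $\|\Phi\|^2=\|A\|=\max\sp(A)$. The spectrum $\sp(A)$ is compact, so its maximum is attained, and by \eqref{vetta} this maximal point lies in $\bigcup_i\sp\big(\Pi_i(A)\big)$; hence there is an index $k$ with $\|A\|\in\sp\big(\Pi_k(A)\big)$, whence $\|\Pi_k(A)\|\ge\|A\|$, and with contractivity $\|\Pi_k(A)\|=\|A\|$. Consequently $\|\Pi_k(\Phi)\|^2=\|\Pi_k(A)\|=\|A\|=\|\Phi\|^2$, so $k$ norms $\Phi$ and $\mathcal G$ is strict norming. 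The three implications close the cycle, giving the equivalence of strict norming, the invertibility criterion, and \eqref{vetta}.
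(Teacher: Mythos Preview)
Your argument is correct. The paper itself does not prove this proposition: it is quoted from \cite[Sect.~3]{NP} and \cite{Ro} (see the sentence immediately preceding the statement), and the remark right after it about minimal unitalizations again defers to \cite{NP}. So there is no in-paper proof to compare against; you have supplied a self-contained proof where the paper chose to cite.

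A couple of small comments. Your reduction to the unital case is exactly the move the paper signals (``passing from the $C^*$-algebras to their minimal unitalizations is simple; it is described in \cite[Sect.~3]{NP}''), and you are right to flag that this is needed so that $f(A)\in\CC$ even when $f(0)\ne 0$; without it the bump-function step would not apply to the original algebra. In the implication ``strict norming $\Rightarrow$ invertibility criterion'' you treat the case $A=\Phi^*\Phi$; the case $A=\Phi\Phi^*$ is of course symmetric and yields the same conclusion for $\Pi_k(\Phi)$ via non-invertibility of $\Pi_k(\Phi)\Pi_k(\Phi)^*$. The closing step ``\eqref{vetta} $\Rightarrow$ strict norming'' is the standard compactness-of-the-spectrum argument and is fine as written.
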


Inverses and the spectra above are computed in the minimal unitalizations of $\CC$ and $\CC_i$\,, respectively (or, equivalently, in any larger unital $C^*$-algebra). Passing from the $C^*$-algebras to their minimal unitalizations is simple; it is described in \cite[Sect. 3]{NP}.

A similar characterization of faithfulness is also given in \cite{NP}, where the extra condition $\,\sup_{i}\!\p\!\Pi_i(\Phi)^{-1}\!\p_{\CC_i}<\infty\,$ should be added to the element-wise invertibility and in the right-hand side of \eqref{vetta} a closure is needed. So one gets better results for strictly norming than for faithful families. Although exhaustiveness is not involved directly in the spectral results, it often looks easier to check that the strictly norming property.

\begin{Proposition}\label{aceastta}
Let $\big(\O_\infty,\vartheta,\G\big)$ be a compact quasi-regular dynamical system and $\{\o_i\mid i\in I\}$ a sufficient family of points of $\,\O_\infty$\,. For every $\,i\in I$ consider the restriction morphism
\begin{equation*}\label{strik}
\pi_i:\mathcal C(\O_\infty)\to \mathcal C({\sf Q}^{\o_i})\,,\quad \pi_i(b):=b|_{{\sf Q}^{\o_i}}
\end{equation*} 
and then (cf. Remark \ref{mamsatu})
\begin{equation*}\label{crux}
\Pi_i:=\pi_i^\rtimes:\mathcal C(\O_\infty)\!\rtimes\!\G\to \mathcal C({\sf Q}^{\o_i})\!\rtimes\!\G\,.
\end{equation*}
Then $\,\mathcal G:=\big\{\Pi_i\mid i\in I\big\}$ is an exhausting family of morphisms.
\end{Proposition}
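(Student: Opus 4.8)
The plan is to verify that the family $\mathcal G=\{\Pi_i\mid i\in I\}$ is exhaustive, i.e.\ that every primitive ideal $\mathfrak P$ of $\mathcal C(\O_\infty)\!\rtimes_\th\!\G$ contains $\ker\Pi_i$ for some $i\in I$. The first step is to identify $\ker\Pi_i$ concretely. Since $\pi_i$ is the restriction morphism onto the closed $\vartheta$-invariant subset ${\sf Q}^{\o_i}$, one has $\ker\pi_i=\mathcal C^{{\sf Q}^{\o_i}}(\O_\infty)$, and by the exactness of the full crossed product functor (\cite[Prop.~3.19]{Wi}, the same tool already used around \eqref{pasat}) the induced map $\Pi_i=\pi_i^\rtimes$ is surjective with kernel the ideal $\mathcal C^{{\sf Q}^{\o_i}}(\O_\infty)\!\rtimes\!\G$ of $\mathcal C(\O_\infty)\!\rtimes\!\G$. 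So the goal becomes: every primitive ideal of the crossed product contains $\mathcal C^{{\sf Q}^{\o_i}}(\O_\infty)\!\rtimes\!\G$ for some $i$.

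Next I would use quasi-regularity. A primitive ideal $\mathfrak P$ is the kernel of an irreducible representation $P=p\rtimes U$ of $\mathcal C(\O_\infty)\!\rtimes_\th\!\G$; by the quasi-regularity hypothesis (Definition~\ref{fromthere}) there is a point $\o\in\O_\infty$ with ${\rm Res}(\ker P)=\ker p=\mathcal C^{{\sf Q}^\o}(\O_\infty)$. The key elementary fact is that $\ker p=\mathcal C^{{\sf Q}^\o}(\O_\infty)$ implies $\mathcal C^{{\sf Q}^\o}(\O_\infty)\!\rtimes\!\G\subseteq\ker P=\mathfrak P$: indeed an element of the ideal $\mathcal C^{{\sf Q}^\o}(\O_\infty)\!\rtimes\!\G$ can be approximated in $L^1(\G;\mathcal C^{{\sf Q}^\o}(\O_\infty))$, whose image under the integrated form $p\rtimes U$ is built from $p$-values of $\mathcal C^{{\sf Q}^\o}(\O_\infty)$-functions, all of which are zero; passing to the $C^*$-closure gives $\mathfrak P\supseteq\mathcal C^{{\sf Q}^\o}(\O_\infty)\!\rtimes\!\G$. (Alternatively this is just the statement that the ideal of the crossed product corresponding to a $\vartheta$-invariant closed set is contained in $\ker P$ precisely when the restricted ideal is, a standard fact about crossed products one may quote from \cite{Wi}.)

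The remaining step is to connect the point $\o$ produced by quasi-regularity with the sufficient family. Since $\{\o_i\mid i\in I\}$ is a sufficient family (Definition~\ref{ghiwech}), the quasi-orbits cover: $\bigcup_{i\in I}{\sf Q}^{\o_i}=\O_\infty$, so there is some $i$ with $\o\in{\sf Q}^{\o_i}$. Because ${\sf Q}^{\o_i}$ is closed and $\vartheta$-invariant and contains $\o$, it contains the whole orbit of $\o$ and hence its closure, so ${\sf Q}^\o=\overline{{\sf O}^\o}\subseteq{\sf Q}^{\o_i}$. At the level of vanishing ideals this reverses inclusions: $\mathcal C^{{\sf Q}^{\o_i}}(\O_\infty)\subseteq\mathcal C^{{\sf Q}^\o}(\O_\infty)$, and therefore, applying the exact crossed product functor once more, $\mathcal C^{{\sf Q}^{\o_i}}(\O_\infty)\!\rtimes\!\G\subseteq\mathcal C^{{\sf Q}^\o}(\O_\infty)\!\rtimes\!\G\subseteq\mathfrak P$. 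Since $\ker\Pi_i=\mathcal C^{{\sf Q}^{\o_i}}(\O_\infty)\!\rtimes\!\G$, this exhibits $\ker\Pi_i\subseteq\mathfrak P$, proving exhaustiveness.

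I expect the only genuinely delicate point to be the bookkeeping in the second step: carefully justifying that $\ker p=\mathcal C^{{\sf Q}^\o}(\O_\infty)$ for the covariant pair forces the corresponding crossed-product ideal into $\ker(p\rtimes U)$. Everything else — surjectivity and kernel computation for $\pi_i^\rtimes$, the order-reversal of vanishing ideals under inclusion of closed invariant sets, and the covering argument from the sufficient family — is routine once the correspondence between closed $\vartheta$-invariant subsets, $\th$-invariant ideals of $\mathcal C(\O_\infty)$, and ideals of the crossed product (via \cite[Prop.~3.19]{Wi}) is invoked. It is worth remarking that this also proves slightly more, namely that $\mathcal G$ is faithful (each $\Pi_i$ is surjective and $\bigcap_i\ker\Pi_i=\{0\}$ because the $\mathcal C^{{\sf Q}^{\o_i}}(\O_\infty)$ have trivial intersection, the quasi-orbits covering $\O_\infty$), but exhaustiveness is the statement actually needed to feed into Proposition~\ref{macmac}.
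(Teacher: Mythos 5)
Your argument is correct and follows essentially the same route as the paper's own proof: identify $\ker(\Pi_i)=\mathcal C^{{\sf Q}^{\o_i}}(\O_\infty)\!\rtimes\!\G$ via exactness of the crossed product functor, use quasi-regularity to place $\mathcal C^{{\sf Q}^{\o}}(\O_\infty)\!\rtimes\!\G$ inside the given primitive ideal (the paper cites formula \eqref{maximilian} for exactly the approximation step you spell out), and then use the covering property of the sufficient family to reverse the inclusion of vanishing ideals. No gaps.
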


\begin{proof}
First we identify the kernel of $\Pi_i$\,. Obviously one has $\,\ker(\pi_i)=\mathcal C^{{\sf Q}^{\o_i}}\!(\O_\infty)$\,. Since the crossed product functor is exact, one gets $\,\ker(\Pi_i)=\mathcal C^{{\sf Q}^{\o_i}}\!(\O_\infty)\!\rtimes\!\G$\,. So, to show that our family is exhaustive, one has to check that every primitive ideal $\mathfrak P$ of $\mathcal C(\O_\infty)\!\rtimes\!\G$ contains $\mathcal C^{{\sf Q}^{\o_k}}\!(\O_\infty)\!\rtimes\!\G$ for some $k\in I$\,.

Let $P:=p\!\rtimes\!U$ be an irreducible representation of $\mathcal C(\O_\infty)\!\rtimes\!\G$ with $\ker(P)=\mathfrak P$\,.
Since the dynamical system is quasi-regular, the representation $P$ lives on a quasi-orbit ${\sf Q}^\o$\,, meaning that $\ker(p)=\mathcal C^{{\sf Q}^{\o}}\!(\O_\infty)$\,. It follows from this and from formula \eqref{maximilian} that $\,\mathfrak P\,\supset\,\mathcal C^{{\sf Q}^{\o}}\!(\O_\infty)\!\rtimes\!\G$\,. 

But $\big\{{\sf Q}^{\o_i}\mid i\in I\big\}$ is a covering of $\O_\infty$\,, so $\o\in{\sf Q}^{\o_k}$ for some index $k$\,, which implies ${\sf Q}^{\o}\subset{\sf Q}^{\o_k}$ and then $\,\mathcal C^{{\sf Q}^{\o}}\!(\O_\infty)\supset\mathcal C^{{\sf Q}^{\o_k}}\!(\O_\infty)$\,. Therefore
\begin{equation*}\label{paci}
\mathfrak P\,\supset\,\mathcal C^{{\sf Q}^{\o}}\!(\O_\infty)\!\rtimes\!\G\supset\mathcal C^{{\sf Q}^{\o_k}}\!(\O_\infty)\!\rtimes\!\G=\ker(\Pi_k)\,.
\end{equation*}
\end{proof}

To make the connection with the operators $H^{\o_i}$ and to finish the proof of our Theorem \ref{labba}, we are only left with checking that for any point $\o\in\O_\infty$
$$
{\sf Sch^\o}:\mathcal C\big({\sf Q}^\o\big)\!\rtimes\!\G\to\mathbb B\!\[L^2(\G)\]\,,\quad{\sf Sch^\o}(\Psi):={\sf Sch}(\Psi^\o)
$$
is a faithful representation. It is obviously the composition of the faithful representation 
$$
{\sf Sch}:\mathcal{LUC}(\G)\!\rtimes_{\sf l}\G\to\mathbb B\!\[L^2(\G)\]
$$
with the morphism 
$$
\Gamma^\o:=\big(\gamma^\o\big)^{\!\rtimes}:\mathcal C\big({\sf Q}^\o\big)\!\rtimes\!\G\to\mathcal{LUC}(\G)\!\rtimes_{\sf l}\G\,,
$$ 
where at the Abelian level
\begin{equation*}\label{belian}
\big[\gamma^\o(b)\big](q):=b\big[\vartheta_q(\o)\big]\,.
\end{equation*}
Since $\gamma^\o$ is injective, $\Gamma^\o$ is injective too.

\section{On quasi-regularity}\label{anulatus}

Theorem \ref{labba} relies on the implicit requirement that the dynamical system $\big(\O_\infty,\vartheta,\G\big)$ be quasi-regular. There is no counter-example to this property, so it might simply be proved to hold always and then disappear from the assumptions of Theorem \ref{labba} and also from many other results in the theory of crossed products where it plays a role. Experts believe that a dynamical system failing to be quasi-regular, if it exists, should be very pathological.
We cite from \cite[pag. 184]{Wi}: "Most systems we are interested in will be quasi-regular and it may even be the case that all are." This even refers to actions of locally compact groups on {\it non-abelian} $C^*$-algebras. 

In \cite{Wi} quasi-regularity is studied in connection with the quasi-orbit structure of the dynamical system. Let $\mathbf Q(\O_\infty)$  be the quasi-orbit space of $\big(\O_\infty,\vartheta,\G\big)$ with the quotient topology. It is constructed from the equivalence relation $\o\sim\o'$ if and only if ${\sf Q}^\o={\sf Q}^{\o'}$. Equivalently, it is the $T_0$-ization \cite[Def. 6.9]{Wi} of the orbit space $\mathbf O(\O_\infty):=\O_\infty/\G$ (this means that in the topological space $\mathbf O(\O_\infty)$\,, which might not be $T_0$\,, points are identified if they have the same closure).  By \cite[Prop. 6.21]{Wi}, $\big(\O_\infty,\vartheta,\G\big)$ {\it is quasi-regular if either $\mathbf Q(\O_\infty)$ is second countable (which follows if $\O_\infty$ is second countable) or if $\mathbf Q(\O_\infty)$ is almost Hausdorff (see bellow)}. Anyhow, if the initial left invariant $C^*$-algebra $\A[{\sf K},\O]$ is separable, quasi-regularity holds. 

Second countability excludes many interesting examples, so let us spell out the second sufficient condition. A topological space is called {\it almost Hausdorff} if 
every nonempty closed subspace has a relatively open nonempty Hausdorff subspace. Other characterizations of this property may be found in \cite[Sect. 6.1]{Wi}.
It seems, however, difficult to check this property in our case without a good understanding of the quasi-orbit structure of $\big(\O_\infty,\vartheta,\G\big)$\,. Anyhow, we have regularity if the quasi-orbit space is Hausdorff; we will encounter such a situation in Section \ref{anulitus}.

\begin{Remark}\label{separam}
{\rm Results from \cite{LSe} corroborated with Proposition \ref{macmac} seems to indicate that $\big(\O^{luc}_\infty,\vartheta,\mathbb Z^n\big)$\,, corresponding to the crossed product $\ell^\infty(\Z^n)\!\rtimes\!\Z^n$ and to standard band-dominated operators, is quasi-regular. It would be nice to prove such a result for an arbitray locally compact (maybe non-commutative group) $\G$\,.}
\end{Remark}

\section{Some examples}\label{anulitus}

We put now into evidence situations in which the quasi-orbit space is understood; this solves quasi-regularity and allows a concrete application of Theorem \ref{labba}.

Let us assume that ${\sf K}$ is a compact {\it normal} subgroup of $\G$\,. Then ${\sf H}:=\G/{\sf K}$ is a locally compact group, the $C^*$-algebras $\mathcal{LUC}({\sf H})$ and $\mathcal{RUC}({\sf H})$ are available, as well as left and right translations with elements $y{\sf K}={\sf K}y$ of ${\sf H}$\,. We set
\begin{equation*}\label{chaplin}
\mathcal{SO}({\sf H}):=\{b\in\mathcal{LUC}({\sf H})\mid {\sf r}_{y{\sf K}}(b)-b\in\mathcal C_0({\sf H})\,,\ \forall\,y{\sf K}\in{\sf H}\}
\end{equation*}
and call its elements {\it slowly oscillation functions on ${\sf H}$\,.} There is a similar class defined using left translations, but we are not going to need it.
Since $\mathcal C_0({\sf H})$ is an invariant ideal and left translations commute with right translations, it is easy to check that $\mathcal{SO}({\sf H})$ is a left-invariant $C^*$-subalgebra of $\mathcal{LUC}({\sf H})$\,. It is unital and contains $\mathcal C_0({\sf H})$\,. Therefore its Gelfand spectrum $\O^\mathcal{SO}$ is an ${\sf H}$-compactification of ${\sf H}=\G/{\sf K}$\,. Using previous notation one has $\mathcal{SO}({\sf H})=\mathcal C\left\<\O^\mathcal{SO}\right\>\cong\mathcal C\big(\O^\mathcal{SO}\big)$\,.

\begin{Lemma}\label{scar}
One has a right invariance data $\Upsilon:=\Big({\sf K},\O^\mathcal{SO},\vartheta\Big)$\,, where every $\vartheta_x$ is trivial on $\O^\mathcal{SO}_\infty:=\O^\mathcal{SO}\setminus{\sf H}$\,. An element $a$ of $\mathcal{LUC}(\G)$ belongs to the associated $C^*$-algebra $\A\big[{\sf K},\O^\mathcal{SO}\big]$ if and only if
\begin{equation*}\label{chaplan}
a(xz)=a(x)\,,\quad\forall\,x\in\G\,,\ z\in{\sf K}
\end{equation*}
and for every $\epsilon>0$ and $z\in\G$ there exists a compact subset $\kappa$ of $\,\G/{\sf K}$ such that
\begin{equation*}\label{claftan}
|a(xz)-a(x)|\le\epsilon\quad{\rm if}\,\ x{\sf K}\notin\kappa\,.
\end{equation*}
\end{Lemma}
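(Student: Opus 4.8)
The plan is to verify the three requirements defining a right invariance data $\Upsilon=({\sf K},\O^{\mathcal{SO}},\vartheta)$ and then to unwind the definition \eqref{spalat} of $\A[{\sf K},\O^{\mathcal{SO}}]$ in the concrete terms of $\mathcal{SO}({\sf H})$. First I would observe that, since ${\sf K}$ is \emph{normal}, the right action ${\sf r}$ of $\G$ on $\G/{\sf K}={\sf H}$ descends to the right translation action of ${\sf H}$ on itself, and the relevant $\G$-compactification of $\G/{\sf K}$ must be constructed from a \emph{left}-invariant subalgebra of $\mathcal{LUC}({\sf H})$. The preceding paragraph already records that $\mathcal{SO}({\sf H})$ is a unital left-invariant $C^*$-subalgebra containing $\mathcal C_0({\sf H})$, so its Gelfand spectrum $\O^{\mathcal{SO}}$ is automatically an ${\sf H}$-compactification of ${\sf H}$; lifting the ${\sf H}$-action along the quotient map $\G\to\G/{\sf K}={\sf H}$ produces the $\G$-action $\vartheta$, and $\G/{\sf K}$ sits as a dense open $\vartheta$-invariant subset. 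That gives the right invariance data. The claim that each $\vartheta_x$ acts trivially on $\O^{\mathcal{SO}}_\infty$ is the first point needing a genuine argument: it amounts to showing that for $b\in\mathcal{SO}({\sf H})$ the difference ${\sf l}_{x{\sf K}}(b)-b$ lies in $\mathcal C_0({\sf H})$, i.e. that left translation induces the identity on the quotient $\mathcal{SO}({\sf H})/\mathcal C_0({\sf H})\cong\mathcal C(\O^{\mathcal{SO}}_\infty)$. Here I would use that left and right translations on ${\sf H}$ commute, so for fixed $y{\sf K}$ the function $x{\sf K}\mapsto {\sf r}_{y{\sf K}}({\sf l}_{x{\sf K}} b)-{\sf l}_{x{\sf K}} b$ is the left translate of ${\sf r}_{y{\sf K}}(b)-b\in\mathcal C_0({\sf H})$, hence again in $\mathcal C_0({\sf H})$; combining this with a small computation and the defining property of $\mathcal{SO}$ shows that left translates of a slowly oscillating function agree with it modulo $\mathcal C_0({\sf H})$. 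This is the technical heart of the lemma.

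Next I would translate \eqref{spalat} into the stated pointwise conditions on $a\in\mathcal{LUC}(\G)$. By definition $a\in\A[{\sf K},\O^{\mathcal{SO}}]$ iff $a\in\mathcal{LUC}(\G)^{({\sf K})}$ — which is exactly the condition ${\sf r}_z(a)=a$ for $z\in{\sf K}$, i.e. $a(xz^{-1})=a(x)$, equivalently $a(xz)=a(x)$ for all $z\in{\sf K}$ — and $a_{\sf K}\in\mathcal C\langle\O^{\mathcal{SO}}\rangle=\mathcal{SO}({\sf H})$. The latter unpacks, via the definition of $\mathcal{SO}({\sf H})$, as: for every $y{\sf K}\in{\sf H}$, the function ${\sf r}_{y{\sf K}}(a_{\sf K})-a_{\sf K}$ is in $\mathcal C_0({\sf H})=\mathcal C_0(\G/{\sf K})$; unravelling ${\sf r}_{y{\sf K}}(a_{\sf K})(x{\sf K})=a_{\sf K}(x{\sf K}y{\sf K})=a_{\sf K}(xy{\sf K})=a(xy)$ and the meaning of "lies in $\mathcal C_0$" (vanishing at infinity on $\G/{\sf K}$) gives precisely: for all $\epsilon>0$ and $z\in\G$ (write $z$ in place of $y$) there is a compact $\kappa\subset\G/{\sf K}$ with $|a(xz)-a(x)|\le\epsilon$ whenever $x{\sf K}\notin\kappa$. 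One should also note that $a_{\sf K}$ is automatically in $\mathcal{LUC}({\sf H})$ because $a\in\mathcal{LUC}(\G)$ is ${\sf K}$-right-invariant, so no extra hypothesis is hidden there.

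The expected main obstacle is the triviality of the boundary action, i.e. checking that $\vartheta_x$ restricts to the identity on $\O^{\mathcal{SO}}_\infty$. One must be slightly careful that "slowly oscillating" in $\mathcal{SO}({\sf H})$ is defined with \emph{right} translations while the compactification carries the \emph{left} action $\vartheta$ — it is precisely the commutation of left and right translations, together with the invariance of $\mathcal C_0({\sf H})$, that makes the two interact correctly, and I would make sure to spell that interchange out rather than assert it. Everything else is a matter of carefully composing the canonical identifications $\A[{\sf K},\O]\cong\mathcal C\langle\O\rangle$, $\mathcal C\langle\O^{\mathcal{SO}}\rangle=\mathcal{SO}({\sf H})$, and the bijection between right-${\sf K}$-invariant functions on $\G$ and functions on $\G/{\sf K}$ from Section \ref{firica}, then rewriting the membership conditions for $\mathcal{SO}({\sf H})$ in terms of $a$ itself; these steps are routine and I would not grind through them in detail.
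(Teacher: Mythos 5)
Your unpacking of the membership criterion for $\A\big[{\sf K},\O^{\mathcal{SO}}\big]$ into the two displayed pointwise conditions is correct and is exactly what the paper dismisses as "follows straightforwardly from the definitions". The gap is in what you yourself flag as the technical heart: the triviality of $\vartheta_x$ on $\O^{\mathcal{SO}}_\infty$. The computation you offer, ${\sf r}_{y{\sf K}}\big({\sf l}_{x{\sf K}}b\big)-{\sf l}_{x{\sf K}}b={\sf l}_{x{\sf K}}\big({\sf r}_{y{\sf K}}b-b\big)\in\mathcal C_0({\sf H})$, proves only that ${\sf l}_{x{\sf K}}b$ is again slowly oscillating, i.e.\ the left invariance of $\mathcal{SO}({\sf H})$ --- a fact already recorded in the paragraph preceding the lemma and not what is needed. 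Triviality of the boundary action is the different statement ${\sf l}_{x{\sf K}}b-b\in\mathcal C_0({\sf H})$ for every $b\in\mathcal{SO}({\sf H})$, and your "small computation combining this with the defining property of $\mathcal{SO}$" is precisely the assertion to be proved, not a derivation of it.

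The point is genuinely delicate because $\mathcal{SO}({\sf H})$ is defined by \emph{right} translations while $\vartheta$ extends the \emph{left} translation action: one must control $b\big(z^{-1}x\big)-b(x)=b\big(x\cdot(x^{-1}z^{-1}x)\big)-b(x)$, and the conjugate $x^{-1}z^{-1}x$ is in general unbounded as $x\to\infty$, so the defining condition of $\mathcal{SO}$ (which fixes the right translate) does not apply. Commutation of left and right translations alone cannot suffice: on ${\sf H}=F_2$ the function equal to $1$ on reduced words beginning with $a$ and $0$ elsewhere is right slowly oscillating, yet its left translate by $a^{-1}$ differs from it by $1$ on all words $b^n$, so the difference is not in $\mathcal C_0$. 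The paper does not attempt this computation either; it invokes \cite[Prop. 3.30]{Man} for exactly this step. To close the gap you must either cite that result or identify the hypotheses under which right slow oscillation forces left translates to agree with the function modulo $\mathcal C_0({\sf H})$, and prove that implication.
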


\begin{proof}
For the first statement one has to show that the (Higson-type) corona space $\,\O^\mathcal{SO}_\infty:=\O^\mathcal{SO}\setminus{\sf H}\,$ is only formed of fixed points under the action of the group $\,{\sf H}$\,. This is a particular case of \cite[Prop. 3.30]{Man}.

The description of the associated $C^*$-algebra follows straightforwardly from the definitions.
\end{proof}

The next result can be easily deduced from these preparations and from Theorem \ref{labba}. It describes the essential spectrum and the Fredholm property of a Schr\"odinger (or pseudo-differential) operator "with slowly oscillating coefficients" in terms of a family of convolution operators. By Lemma \ref{scar}, the orbits in the corona space are singletons, so the (quasi-)orbit space of $\O^\mathcal{SO}_\infty$ can be identified to $\O^\mathcal{SO}_\infty$ itself. It is Hausdorff, thus almost Hausdorff, so the dynamical system is qusi-regular.

\begin{Corollary}\label{rolar}
Let ${\sf K}$ be a compact normal subgroup of the amenable locally compact group $\G$\,. For every $\Phi\in L^1\Big(\G;\A\big[{\sf K},\O^\mathcal{SO}\big]\Big)$ and every $\o\in\O^\mathcal{SO}_\infty$\,, using notations from Section \ref{flokoshin}, let $\Phi^\o(\cdot):=\widetilde\Phi_{\sf K}(\o;\cdot)\in L^1(\G)$\,. Set $H:={\sf Sch}(\Phi)$ and $H^\o:={\sf Conv}(\Phi^\o)$\,.
\begin{enumerate}
\item
One has 
$$
{\rm sp}_{\rm ess}(H)=\!\bigcup_{\o\in\O^\mathcal{SO}_\infty}\!{\sp}(H^\o)\,.
$$
\item
The operator $H$ is Fredholm if and only if all the operators $H^\o$ are invertible.
\end{enumerate}
\end{Corollary}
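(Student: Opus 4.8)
The plan is to deduce Corollary \ref{rolar} from Theorem \ref{labba} by specializing the general machinery to the right invariance data $\Upsilon=\big({\sf K},\O^\mathcal{SO},\vartheta\big)$ furnished by Lemma \ref{scar}. First I would observe that, since ${\sf K}$ is normal, ${\sf H}=\G/{\sf K}$ is a group, $\mathcal{SO}({\sf H})$ is a unital left-invariant $C^*$-subalgebra of $\mathcal{LUC}({\sf H})$ containing $\mathcal C_0({\sf H})$, and its Gelfand spectrum $\O^\mathcal{SO}$ is an ${\sf H}$-compactification of ${\sf H}$; by Lemma \ref{scar} the action $\vartheta$ is trivial on the corona $\O^\mathcal{SO}_\infty$. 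Hence every orbit in $\O^\mathcal{SO}_\infty$ is a singleton, the orbit space $\mathbf O(\O^\mathcal{SO}_\infty)$ already equals $\O^\mathcal{SO}_\infty$ as a set, and since it is Hausdorff it is almost Hausdorff, so by \cite[Prop. 6.21]{Wi} the dynamical system $\big(\O^\mathcal{SO}_\infty,\vartheta,\G\big)$ is quasi-regular. This verifies the standing hypothesis of Theorem \ref{labba}.

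Next I would identify the sufficient family and the asymptotic operators. Because orbits are singletons, each quasi-orbit is ${\sf Q}^\o=\{\o\}$, so the only covering of $\O^\mathcal{SO}_\infty$ by quasi-orbits is the full family $\{\o\mid\o\in\O^\mathcal{SO}_\infty\}$; thus $I=\O^\mathcal{SO}_\infty$ is the (essentially unique) sufficient family, and the union in \eqref{fich} runs over all $\o\in\O^\mathcal{SO}_\infty$. I would then trace through the construction (i)--(iii) of Section \ref{flokoshin} for $\Phi\in L^1\big(\G;\A[{\sf K},\O^\mathcal{SO}]\big)$: step (i) passes to $\Phi_{\sf K}$ on ${\sf H}\times\G$, step (ii) extends in the first variable to $\widetilde\Phi_{\sf K}$ on $\O^\mathcal{SO}\times\G$, and step (iii) sets $\Phi^\o(q,x)=\widetilde\Phi_{\sf K}(\vartheta_q(\o),x)=\widetilde\Phi_{\sf K}(\o,x)$ since $\vartheta_q$ fixes $\o$. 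So $\Phi^\o$ is independent of the first variable $q$, i.e. it is of the form $1\otimes\Phi^\o(\cdot)$ with $\Phi^\o(\cdot):=\widetilde\Phi_{\sf K}(\o;\cdot)\in L^1(\G)$, and therefore by Remark \ref{fredegonda} one has $H^\o={\sf Sch}(\Phi^\o)={\sf Sch}(1\otimes\Phi^\o(\cdot))={\sf Conv}(\Phi^\o(\cdot))$, matching the statement. Substituting into Theorem \ref{labba}(1) and (2) gives exactly the two assertions of the Corollary.

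The only real point needing care — and the step I expect to be the main (mild) obstacle — is verifying that the integrability claim $\Phi^\o(\cdot)\in L^1(\G)$ actually holds, i.e. that the extension in step (ii) can be chosen so that the resulting section is Bochner integrable and the evaluation at the fixed point $\o$ lands in $L^1(\G)$ rather than merely in $\mathcal{LUC}(\G)$. This is addressed in Section \ref{flokoshin} in general (the remark that $\Phi^\o\in L^1\big(\G;\mathcal{LUC}(\G)\big)$), and here the extra structure forces the $\mathcal{LUC}(\G)$-component to be constant, so $\Phi^\o$ is a genuine scalar $L^1$-function; one should just note that evaluating a continuous-in-the-first-variable $L^1$-valued map at a point preserves the $L^1$-norm control. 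With that in hand the Corollary is a direct translation of Theorem \ref{labba}, and the sharpening in Remark \ref{eventual} (that spectra depend only on the quasi-orbit) is automatic here since quasi-orbits are points. I would also add the one-line remark that, when $\G$ is admissible, Remark \ref{ginall} lets one phrase everything in terms of ${\sf Op}(f)$ with $f=\mathfrak F\Phi$, recovering the pseudo-differential formulation mentioned in the Corollary's preamble.
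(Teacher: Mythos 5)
Your proposal is correct and follows essentially the same route as the paper: Lemma \ref{scar} gives the right invariance data with trivial action on the corona, so quasi-orbits are singletons, the quasi-orbit space is the Hausdorff space $\O^\mathcal{SO}_\infty$ itself (hence almost Hausdorff, hence quasi-regular), the sufficient family is necessarily all of $\O^\mathcal{SO}_\infty$, and $\Phi^\o(q,x)=\widetilde\Phi_{\sf K}(\vartheta_q(\o),x)=\widetilde\Phi_{\sf K}(\o,x)$ is independent of $q$, so $H^\o={\sf Conv}(\Phi^\o)$ by Remark \ref{fredegonda}. The paper treats this as an immediate consequence of Theorem \ref{labba} and the preparations in Section \ref{anulitus}; your write-up just makes the same steps explicit.
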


To get very explicit results, let us suppose that ${\sf K}=\{\e\}$ and $\Phi=a\otimes\varphi$\,, with $a\in\A\big[\{\e\},\O^\mathcal{SO}\big]=\mathcal{SO}(\G)$ and $\varphi\in L^1(\G)$\,. Then $\Phi^\o=\widetilde a(\o){\sf Conv}(\varphi)$\,, where $\widetilde a$ is the extension to $\O^\mathcal{SO}$ of $a:{\sf G}\to\mathbb C$\,. Obviously $\,{\sp}(H^\o)=\widetilde a(\o){\sp}[{\sf Conv}(\varphi)]$\,, so one gets 
$$
{\rm sp}_{\rm ess}(H)=\Big[\!\bigcup_{\o\in\O^\mathcal{SO}_\infty}\!\!\widetilde a(\o)\Big]{\rm sp}[{\sf Conv}(\varphi)]\,.
$$ 
On the other hand, if the group $\G$ is connected, it is true that 
$$
\bigcup_{\o\in\O^\mathcal{SO}_\infty}\!\!\widetilde a(\o)=\big[\liminf_{x\to\infty} a(x),\limsup_{x\to\infty}a(x)\big]\,,
$$
so {\it one gets the essential spectrum of $H$ by scaling the spectrum of the convolution operator $\,{\sf Conv}(\varphi)$ by the asymptotic range of the slowly oscillating function $a$}\,.

The operator $H^\o$ is invertible if and only if ${\sf Conv}(\varphi)$ is invertible and $\,\widetilde a(\o)$ is non-null. So finally {\it $H$ is Fredholm if and only if $\,{\sf Conv}(\varphi)$ is invertible and $0$ is not contained in the asymptotic range of $a$\,}.

One can treat similarly the case $\Phi(q;x)=\varphi(x)+a(q)$\,. For example
$$
{\rm sp}_{\rm ess}\big[{\sf Sch}(\Phi)\big]={\rm sp}[{\sf Conv}(\varphi)]+\big[\liminf_{x\to\infty} a(x),\limsup_{x\to\infty}a(x)\big]\,.
$$

\begin{Remark}\label{frustuc}
{\rm If $\G$ is Abelian, than ${\sf Conv}(\varphi)$ is unitarily equivalent, via the Fourier transformation ${\fscr F}$, to the operator of multiplication with $\,\widehat{\varphi}:={\fscr F}(\varphi)$ in the $L^2$ Hilbert space of the dual group $\wG$\,. The spectrum of this operator is the closure of the range $\widehat{\varphi}\big(\wG\big)$\,; it is invertible if and only if this closed set does not contain the point $\{0\}$\,. This makes the results above even more explicit.
}
\end{Remark}

\begin{Remark}\label{ducts}
{\rm Let us suppose that $\G=\G_1\times\G_2$ is the product of two locally compact groups. In general $\mathcal{SO}({\sf G})\ne\mathcal{SO}({\sf G_1})\otimes\mathcal{SO}({\sf G_2})$\,. The tensor product $\mathcal{SO}({\sf G_1})\otimes\mathcal{SO}({\sf G_2})$ is still very manageable. Its Gelfand spectrum can be identified to the topological product $\O^{\mathcal{SO},1}\!\times\O^{\mathcal{SO},2}$ (obvious notations). Since the natural action of $\G_1\times\G_2$ is a product action, it is easy to understand the orbit and the quasi-orbit structure in the corona space 
$$
\Big(\O^{\mathcal{SO},1}\!\times\O^{\mathcal{SO},2}\Big)\setminus\big(\G_1\times\G_2\big)=\big(\O^{\mathcal{SO},1}_\infty\!\times\G_2\big)\sqcup\big(\G_1\times\O^{\mathcal{SO},2}\big)\sqcup\Big(\O^{\mathcal{SO},1}_\infty\!\times\O^{\mathcal{SO},2}_\infty\Big)\,.
$$
The orbits of the last component are just singletons $\{(\o_1,\o_2)\}$\,, already closed. Those of $\O^{\mathcal{SO},1}_\infty\!\times\G_2$ have the form $\{\o_1\}\times\G_2$ for some $\o_1\in\O^{\mathcal{SO},1}_\infty$, with closure $\{\o_1\}\times\O^{\mathcal{SO},2}$ and those of $\O^{\mathcal{SO},1}_\infty\!\times\G_2$ have the form $\G_1\times\{\o_2\}$ for some $\o_2\in\O^{\mathcal{SO},2}_\infty$, with closure $\O^{\mathcal{SO},1}\times\{\o_2\}$\,. The singleton quasi-orbits are covered by other quasi-orbits, so they can be neglected. A sufficient family of points in the corona space is 
$$
\Big\{\o_1\times\e_2\,\big\vert\,\o_1\in\O^{\mathcal{SO},1}\Big\}\cup\Big\{\e_1\times\o_2\,\big\vert\,\o_2\in\O^{\mathcal{SO},2}\Big\}\,,
$$
where $\e_j$ denotes the unit of the group $\G_j$\,.  We leave to the motivated reader the work of writing down the statements of Theorem \ref{labba} in this particular case.
}
\end{Remark}

Finally we indicate a more complex example for which the quasi-orbit structure can be computed. Then the spectral results can be easily deduced from Theorem \ref{labba}; some particular cases can already be found in \cite{Ma}. The critical result allowing to understand the quasiorbits in the non-commutative case is proven (in greater generality) in \cite{Man}, to which we send for further details and references. 

We consider only the case ${\sf K}=\{\e\}$\,, for simplicity.

A left-invariant $C^*$-subalgebra $\mathcal B$ of $\mathcal{LUC}(\G)$ will be called {\it $\G$-simple} if it contains no proper left-invariant ideal; this is equivalent with its Gelfand spectrum $\O^{\mathcal B}$ being a minimal dynamical sistem (all orbits are dense). The elements of such a $\G$-simple $C^*$-algebra are called {\it minimal functions}. Large $\G$-simple $C^*$-algebras exist. All the almost periodic functions form such an algebra (the Gelfand spectrum is the Bohr "compactification" of $\G$), but there are other explicit examples involving distal or almost automorphic functions.

Let us pick a $\G$-simple $C^*$-algebra $\mathcal B$ and denote by $\big<\mathcal B\cdot\mathcal{SO}(\G)\big>$ the smallest $C^*$-algebra containing both $\mathcal B$ and $\mathcal{SO}(\G)$ (it is generated by products $bc$ with $b\in\mathcal B$ and $c\in\mathcal{SO}(\G)$)\,. It is easy to show thet it is a left-invariant subalgebra of $\mathcal{LUC}(\G)$\,. It is less easy, but still true \cite{Man}, that 
$$
\O^{\big<\mathcal B\cdot\mathcal{SO}(\G)\big>}=\G\sqcup\Big(\O^\mathcal B\times\O^\mathcal{SO}_\infty\Big)
$$
and that the (already closed) orbits of $\O^{\big<\mathcal B\cdot\mathcal{SO}(\G)\big>}_{\,\infty}$ are of the form $\O^\mathcal B\times\{\o'\}$ for some point $\o'\in\O^\mathcal{SO}_\infty$ of the Higson corona. Therefore the orbit and the quasi-orbit spaces coincide and can be identified to the Hausdorff compact space $\O^\mathcal{SO}_\infty$\,; quasi-regularity is insured in a simple way. Choosing some $\o_0\in\O^\mathcal B$, the family $\big\{(\o_0,\o')\mid \o'\in\O^\mathcal{SO}_\infty\big\}$ is sufficient. If $a\in\big<\mathcal B\cdot\mathcal{SO}(\G)\big>$\,, say $a=bc$ with $b\in\mathcal B$ and $c\in\mathcal{SO}(\G)$\,, then $a^{(\o_0,\o')}(x)=c(\o')\widetilde b\big[\vartheta'_x(\o_0)\big]$\,, where $\vartheta'$ is the (minimal) action of $\G$ on $\O^\mathcal B$. Thus the $x$-dependence of this function comes from the minimal component $b$\,; the slowly oscillating part $c$\,, evaluated in $\o'$\,, serves as a coupling constant.


\medskip

Marius M\u antoiu:
  \endgraf
  Departamento de Matem\'aticas, Universidad de Chile,
  \endgraf
  Casilla 653, Las Palmeras 3425, Nunoa,  Santiago, Chile.
    \endgraf
  {\it E-mail address:} {\rm mantoiu@uchile.cl}

\end{document}